\pdfoutput=1
\documentclass[journal]{IEEEtran}
\hyphenation{op-tical net-works semi-conduc-tor}
\usepackage{amsmath,amsfonts,amssymb,amsthm}

\usepackage{cite}
\usepackage{graphicx}
\usepackage{float}
\usepackage{caption}
\captionsetup[figure]{font=small, skip=1pt}
\usepackage{bm} 
\usepackage{dsfont}
\usepackage{wasysym}
\usepackage{algorithmicx}
\usepackage{algpseudocode}
\usepackage{algorithm}
\algdef{SE}[DOUNTIL]{Do}{Until}{\algorithmicdo}[1]{\textbf{until}\ #1}%
\usepackage{setspace}
\usepackage{accents}
\usepackage{nameref}

\usepackage{etoolbox}
\usepackage{enumerate}
\addtolength{\textfloatsep}{-4mm}
\usepackage{subcaption}
\newcommand{\ubar}[1]{\text{\b{$#1$}}}
\usepackage{nicefrac}
\usepackage{multirow}
\usepackage{hyperref}
\usepackage{cleveref}
\newtheorem{theorem}{Theorem}[section]
\newtheorem{lemma}[theorem]{Lemma}

\newtheorem{definition}[theorem]{Definition}
\newtheorem{corollary}[theorem]{Corollary}

\begin{document}	
	\renewcommand{\qed}{\null\hfill\IEEEQEDclosed}
	\raggedbottom
	\allowdisplaybreaks
	\bstctlcite{MyBSTcontrol}
	\title{Minimising Unserved Energy\\Using Heterogeneous Storage Units}%
	\author{Michael P. Evans, Simon H. Tindemans, \IEEEmembership{Member, IEEE} and David Angeli, \IEEEmembership{Fellow, IEEE}
		\thanks{M.P. Evans and D. Angeli are with the Department of Electrical and Electronic Engineering, Imperial College London, UK (email: m.evans16@imperial.ac.uk; d.angeli@imperial.ac.uk). D. Angeli is in addition with the Department of Information Engineering, University of Florence, Italy. S.H. Tindemans is with the Department of Electrical Sustainable Energy, TU Delft, Netherlands (email: s.h.tindemans@tudelft.nl). This work was supported by EPSRC studentship 1688672.}
		\vspace{-1cm}}
	\date{}
	
	\markboth{Submitted to Transactions on Power Systems}{}
	\maketitle
	
	\begin{abstract}
		This paper considers the optimal dispatch of energy-constrained heterogeneous storage units to maximise security of supply. A policy, requiring no knowledge of the future, is presented and shown to minimise unserved energy during supply-shortfall events, regardless of the supply and demand profiles. It is accompanied by a graphical means to rapidly determine unavoidable energy shortfalls, which can then be used to compare different device fleets. The policy is well-suited for use within the framework of system adequacy assessment; for this purpose, a discrete time optimal policy is conceived, in both analytic and algorithmic forms, such that these results can be applied to discrete time systems and simulation studies. This is exemplified via a generation adequacy study of the British system.
	\end{abstract}
    
	\begin{IEEEkeywords}
		Generation adequacy, energy storage systems, optimal control, aggregation
	\end{IEEEkeywords}
	
	\section{Introduction}
	\label{sec:intro}
	The generation mix is changing, in particular towards increased proliferation of renewable sources, which pose challenges as a result of the stochastic nature of their outputs. Energy storage offers a promising means of mitigating the effect of such fluctuations and, by shifting consumption in time, may also allow for a reduction in the total generation requirement of the network. As a result, recent years have seen an increasing abundance of storage devices connected to electricity networks. 
	
	Generation capacity adequacy can be interpreted as the sustenance of a specified upper limit on the risk of failure to meet demand. Such security of supply risks are multi-faceted, but commonly represented by a few metrics. Perhaps the two most common choices are expected energy-not-served (EENS) and loss-of-load expectation (LOLE). As argued in \cite{Edwards2017}, EENS offers improvements over LOLE as the total shortfall in energy is a more appropriate measure of adequacy than the total length of time for which any shortfall is experienced. Indeed, the European Commission reports that EENS, unlike LOLE, is able to encode the severity of the disruption and enable a monetisation of interruption costs \cite{EuCom}. System operators and regulators monitor these risk metrics, computed via probabilistic studies. With the increased implementation of energy storage, it is therefore imperative that these entities have a means of incorporating storage devices into such studies. This issue becomes more pressing when these studies are used in capacity markets, since the interpretation of the results can have significant financial implications. In the British (GB) market, for example, $\pounds 1.2$ billion of capacity was awarded in the 2016 T-4 auction \cite{ofgem}. In the GB system, the LOLE metric with a target of 3 hours per year is used to determine the overall capacity requirement. However, a new policy for computing the adequacy contribution of storage was adopted in 2018, in which the capacity value of units (represented by their \textit{de-rating factor}) is computed with respect to the EENS metric \cite{NG}. The underlying capacity adequacy studies must therefore run storage in an EENS-minimising fashion. The chosen policies for dispatch of storage devices in simulation studies are described by the GB TSO in \cite{NG}, but provided without a theoretical basis for this methodology. In this paper we derive the dispatch policy that results in the minimum energy shortfall, or energy-not-supplied (ENS) for any realised scenario; by definition, such a policy minimises the \emph{expectation} of ENS (EENS) across all possible scenarios. We also provide an algorithm that is suitable for EENS-minimising studies on the contribution of energy storage to system adequacy.
	
	There is significant literature in which numerical assessments are made of the ability of storage to replace conventional generation. A common approach is to use it as a buffer between committed and realised hourly outputs, of wind \cite{Hu2009} and solar PV \cite{Zhou2017} for example. The latter also performs an optimisation of standard form with the intention of minimising peak load. In \cite{Zhou2015}, the same authors incorporate into their objective function demand response (DR) of varying payback requirements coupled with storage. \cite{Sioshansi2014} assumes that the storage operator aims only to maximise profits, with perfect foresight and no direct consideration of adequacy requirements. Clearly, prior literature covers a range of assumed objectives for a storage operator, and lacks a consistent operational strategy for devices. We argue that minimising EENS is a suitable objective, which is also addressed in \cite{Edwards2017}, albeit for a single store only. 
	
	We consider the task of dispatching a fleet of heterogeneous storage devices so as to best support the grid under supply-shortfall conditions. It is of course crucial that diversity in the available devices be accounted for, so that multiple market players might supply support services. This however poses additional challenges as it is not straightforward to aggregate a fleet of this form. Moreover, in reality many of the storage resources that are connected onto electricity networks are energy-constrained, for example due to physical capacity limits or operational limits set by users, which forms a key consideration in our analysis. 
	
	This paper builds directly on \cite{Evans2017}, which considered the dispatch policy that achieves maximal failure-time, and \cite{Evans2018}, which presented methods of capability assessments for a fleet of storage units. We here extend these results with the  objective of maximising the contribution of energy storage to system adequacy. The contributions of this paper are as follows:
	\begin{itemize}
		\item We extend the aforementioned dispatch policy to include scenarios where the storage fleet is unable to meet demand. 
		\item We show that this policy results in the minimum achievable ENS for a fleet of heterogeneous storage units, irrespective of the actual received reference.
		\item We define an extension to the policy that includes charging operation.
		\item For comparison purposes, we define a peak-shaving policy that achieves the same minimal ENS by relying on perfect foresight.
		\item We present an immediate means of determining the minimum ENS for a heterogeneous fleet using the $E$-$p$ transform of \cite{Evans2018}.
		\item We provide a discrete time algorithm that implements a minimum ENS policy for time-resolved generation adequacy studies.
	\end{itemize}
	
	\noindent It follows that the policy described in this paper can be used to minimise ENS, irrespective of the risk appetite of the storage operator. The discrete time version can therefore be used in simulation studies which include storage to improve security of supply. We illustrate this fact by means of a case study for the GB system with heterogeneous units.
	
	\section{Problem Framework}
	We consider the centralised control of the aggregate response of a fleet of storage units, and for the purposes of description assume that this is undertaken by an aggregator. We assume that the aggregator receives a periodically-updated request from the network operator and accordingly must decide how to deploy its fleet of resources, without knowledge of future request signals. We focus on the decision making of the aggregator, and consider how best to dispatch devices in the presence of uncertain demand. This has been explicitly considered in \cite{Bejan2012,Gast2014,Cruise2015}, applied in the latter to arbitrage as well as system support (balancing services under supply-shortfall conditions). We utilise a similar methodology but take as our objective the provision of maximal contribution to system adequacy.
	
	Ancillary service provision from storage devices has been previously studied in the literature, across a range of devices. These include diesel generators \cite{Wang2015,Mazidi2014}, electric vehicles (EVs) \cite{Shao2012,Sortomme2012,Wenzel2017} and home storage devices \cite{Li2011,Tsui2012,Wang2013}; we here compose a general integrator model, which then allows us to accommodate any such devices. In contrast to the majority of this prior work, \cite{Mazidi2014,Shao2012,Sortomme2012,Wenzel2017,Li2011,Tsui2012,Wang2013}, we consider the aggregate system support capability of the storage units to be of paramount importance, so that the ability to satisfy the power request takes precedence over other objectives. Note that in an economic context this is equivalent to associating a very high cost with failure to meet demand. This allows us to study properties of the system in a general sense, without considering price dynamics in detail. However, a comparable policy to that which we propose, applied to a continuum of devices, is also relevant in such settings \cite{dePaola2017} and enables the coordination of devices within areas of flat prices. \cite{Wang2015} is concerned with trading off the use of a battery and a diesel generator to support renewable resources, and uses constraints to ensure no mismatch between supply and demand. We allow for such a mismatch, with the objective of minimising total energy shortfall over time. We also assume the absence of cross-charging between devices, which corresponds to a regime in which operational losses are minimised. We apply this modelling to the following two cases of interest:
	\begin{enumerate}
		\item An aggregator is contracted to provide system support, receiving periodically updated targets, and is penalised for ENS.
		\item System adequacy studies. For this, we simply remove the role of the aggregator and assume that the system operator is directly tasked with dispatching resources to meet excess demand. We use generic storage units but assume that when a shortfall occurs we are operating the devices in discharge-only mode. 
	\end{enumerate}
	
	\section{Mathematical formulation}
	\subsection{Problem description}
	\noindent We denote by $\mathcal{N}\doteq\{D_1,D_2,...,D_n\}$ the set of energy-constrained storage devices available to the aggregator. We do not impose any restrictions on homogeneity of devices and allow each to have a unique discharging efficiency. For convenience we incorporate this into the model implicitly by considering the extractable energy of each device, $e_i(t)$. We choose the power delivered by each device to be the control input $u_i(t)$, and assume that this is measured externally so that efficiency is once again accounted for. This leads to integrator dynamics on the energy of each device $\dot{e}_i(t)=-u_i(t),$ subject to the assumed physical constraint $e_i(t)\geq0$. As discussed above, we restrict our devices to discharging operation only, so that the power of each device is constrained as $u_i(t)\in[0,\bar{p}_i]$, in which $\bar{p}_i$ denotes the maximum discharge rate of device $D_i$, and with the  convention that discharge rates are positive. We define the \textit{time-to-go} of device $D_i$ to be the time remaining for which this device can run at its maximum power, i.e. $x_i(t) \doteq {e_i(t)}/{\bar{p}_i}$, and represent the state of each device by its time-to-go. We then form state, input and maximum power vectors as
	\begin{alignat}{3}
	x(t)&\doteq \begin{bmatrix}
	x_1(t)\ \dots\ x_n(t)
	\end{bmatrix}^T,\\
	u(t) &\doteq \begin{bmatrix}
	u_1(t)\ \dots\ u_n(t)
	\end{bmatrix}^T,\\
	\bar{p}&\doteq\begin{bmatrix}
	\bar{p}_1\ \dots\	\bar{p}_n
	\end{bmatrix}^T,
	\end{alignat}
	respectively, so that we can write our dynamics in matrix form as $\dot{x}(t)=-P^{-1}u(t)$, in which $P \doteq \text{diag}(\bar{p})$. We finally define the state space as $\mathcal{X}\doteq [0,+\infty)^n$, and form the product set of our constraints on all the inputs,
	\begin{equation}
	\mathcal{U}_{\bar{p}} \doteq [0,\bar{p}_1] \times [0,\bar{p}_2] \times ... \times [0,\bar{p}_n],
	\end{equation}	
	allowing us to write our input constraints as $u(t)\in\mathcal{U}_{\bar{p}}$.
	
	We denote by $P^r\colon [0,+\infty) \mapsto[0,+\infty)$ a power reference signal received by the aggregator, and in addition denote a truncated trajectory of such a signal as
	\begin{equation}
	P^r_{[t_0,t)}\doteq\left\{
	\begin{array}{@{}ll@{}}
	P^r(\tau), & \text{if}\ \tau\in[t_0,t) \\
	0, & \text{otherwise}.
	\end{array}\right.
	\end{equation} 
	We say that a reference that is satisfiable for all time without violating any constraints is \textit{feasible}, and define the set of such signals as follows:
	\begin{definition}
		\label{def:feasible}
		The set of feasible power reference signals, for a system with maximum power vector $\bar{p}$ and initial state $x=x(0)$, is defined as
		\begin{equation*}
		\begin{aligned}
		\mathcal{F}_{\bar{p},x} \doteq \big\{&P^r(\cdot) \colon \exists u(\cdot),\ z(\cdot)\colon \forall t\geq 0,\ 1^Tu(t)=P^r(t),\\
		&u(t)\in \mathcal{U}_{\bar{p}},\ \dot{z}(t)=-P^{-1}u(t),\ z(0)=x,\ z(t) \geq 0 \big\}.
		\end{aligned}
		\end{equation*}
	\end{definition}
	
	\subsection{$E$-$p$ transform}
	The main contribution of \cite{Evans2018} was presentation of the $E$-$p$ transform and results relating to it. For use in this paper, we reproduce the following definitions here:
	\begin{definition}
		Given a power reference $P^r\colon [0,+\infty)\to[0,+\infty)$, we define its E-p transform as the following function:
		\begin{equation}
		E_{P^r}(p)\doteq\int_0^\infty \textnormal{max}\big\{P^r(t)-p,0\big\}dt,
		\end{equation}
		interpretable as the energy required above any given power rating, $p$.
	\end{definition}
	\begin{definition}
		\label{definition: capacity}
		We define the \textbf{capacity} of a system to be the $E$-$p$ transform of the worst-case reference signal that it can meet, i.e. $\Omega_{\bar{p},x}(p)\doteq E_{R}(p)$, where $R(\cdot)$ is defined as
		\begin{equation}
		R(t)\doteq\sum_{i=1}^n\bar{p}_i[H(t)-H(t-x_i)],
		\end{equation}
		in which $H(\cdot)$ denotes the Heaviside step function.
	\end{definition}
	
	\section{Application to finite-duration interruption scenarios}
	\subsection{Explicit feedback policy}
	We present the following instantaneous feedback policy. Without loss of generality, reorder the states by descending value and group them into collections of equal value (leading to $q$ such groups),
		\begin{subequations}
			\label{eq: explicit policy}
			\begin{equation}
			\begin{aligned}
			x_1=...=x_{s_1} > ... > x_{s_{q-1}+1}=...=x_{s_q} .
			\end{aligned}
			\end{equation}
			
			\noindent Denoting by $\bar{U}_1,\bar{U}_2,...,\bar{U}_q$ stacked vectors of subset maximum powers, the explicit feedback law is then calculated as a fraction $r_i'$ of the maximum power $\bar{U}_i$ according to:
			\renewcommand{\arraystretch}{1.25}
			\begin{gather}
			r_i=\left\{
			\begin{array}{@{}ll@{}}
			1, & \text{if}\ \sum_{j\leq i}1^T\bar{U}_j\leq P^r \\
			0, & \text{if}\ \sum_{j< i}1^T\bar{U}_j\geq P^r \\
			\frac{P^r-\sum_{j< i}1^T\bar{U}_j}{1^T\bar{U}_i}, & \text{otherwise},
			\end{array}\right. \\
			r_i'=r_i\cdot\mathds{1}[x_{s_i}>0], \label{eq:indic}\\
			u^*(x,P^r)\doteq\begin{bmatrix}
			r_1'\bar{U}_1^T\ \dots\ r_q'\bar{U}_q^T
			\end{bmatrix}^T
			\end{gather}
		\end{subequations}
		\renewcommand{\arraystretch}{1}
		\hspace{-2.2mm} in which $\mathds{1}[\cdot]$ denotes the indicator function. Denoting by $z^*(\cdot)$ the state trajectory under the application of \eqref{eq: explicit policy}, the closed-loop dynamics are then
		\begin{equation}
		\label{eq:closed-loop dynamics}
		\dot{z}^*(t)=-P^{-1}u^*\big(z^*(t),P^r(t)\big) .
		\end{equation}
		Note that the policy \eqref{eq: explicit policy} extends that presented in \cite{Evans2017} by the addition of \eqref{eq:indic}.
	
	\subsection{Energy-not-served}
	We are concerned with the minimisation of energy-not-served, which we define as follows:	
	\begin{definition}
		The \textbf{energy-not-served}, $\Gamma$, under a reference $P^r(\cdot)$ and the resulting control signal $u(\cdot)$, is the shortfall in total energy output,
		\begin{equation}
		\Gamma\big(1^Tu,P^r\big)\doteq\int_0^\infty\textnormal{max}\big\{P^r(t)-1^Tu(t),0\big\}dt.
		\end{equation}
	\end{definition}

	\noindent We assume that $\Gamma$ is finite, which corresponds to the realistic case that $P^r(\cdot)$ is pointwise finite and has a finite integral. We then define the minimum ENS as follows:
	\begin{definition}
		Given a reference $P^r(\cdot)$ and a fleet with maximum power vector $\bar{p}$ in state $x$, we denote by $\Gamma^*_{\bar{p},x}(P^r)$ the minimum energy-not-served that might \textbf{feasibly} be achieved:
		\begin{equation}
		\Gamma^*_{\bar{p},x}(P^r)\doteq\min\big\{ \Gamma(1^Tu,P^r)\colon 1^Tu(\cdot)\in\mathcal{F}_{\bar{p},x}\big\}.
		\label{Gamma start def}
		\end{equation}
	\end{definition}

	\section{Results on Energy Optimality Under Loss of Supply}
	\subsection{Energy optimality}
	Having chosen as our objective the minimisation of energy-not-served, we now show that the policy \eqref{eq: explicit policy} achieves this aim. For clarity of argument the proof of Theorem~\ref{Theorem V.1} can be found in the \nameref{sec:appendix}.
	\begin{theorem}
		\label{Theorem V.1}
		Given any reference $P^r(\cdot)$, the policy \eqref{eq: explicit policy} minimises energy-not-served, i.e.
		\begin{equation}
		\Gamma(1^Tu^*,P^r)\leq\Gamma(1^T\tilde{u},P^r),
		\end{equation}
		in which $u^*(\cdot)$ and $\tilde{u}(\cdot)$ denote the outputs under the policy \eqref{eq: explicit policy} and any other choice respectively.
	\end{theorem}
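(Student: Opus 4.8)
The plan is to reduce the claim to a statement about \emph{cumulative delivered energy} and then to establish that the priority ordering of \eqref{eq: explicit policy} is greedy-optimal in that quantity. First I would argue that there is no loss of generality in assuming the competitor $\tilde u$ never over-delivers, i.e.\ $1^T\tilde u(t)\le P^r(t)$ for all $t$: given any admissible $\tilde u$, scaling its components down proportionally on the set where $1^T\tilde u>P^r$, so that the aggregate becomes $\min\{1^T\tilde u(t),P^r(t)\}$, yields an admissible $\hat u$ (each $\hat u_i\le\tilde u_i\le\bar p_i$, and since energy is depleted no faster the constraint $e\ge 0$ is preserved) with $\Gamma(1^T\hat u,P^r)\le\Gamma(1^T\tilde u,P^r)$, because reducing output only where it already exceeds $P^r$ creates no new shortfall and conserves energy for later. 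The greedy input $u^*$ itself satisfies $1^Tu^*\le P^r$ by the capping $r_i\le 1$ in \eqref{eq: explicit policy}. Hence, for every policy under consideration the shortfall integrand has a fixed sign and
\begin{equation}
\Gamma(1^Tu,P^r)=\int_0^\infty P^r(t)\,dt-\int_0^\infty 1^Tu(t)\,dt ,
\end{equation}
so that, the total demand being fixed, minimising $\Gamma$ is equivalent to maximising the total delivered energy $G_u(\infty)$, where $G_u(T)\doteq\int_0^T 1^Tu(t)\,dt$.

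The heart of the proof is then the claim that the greedy policy maximises $G_u(T)$ at \emph{every} horizon $T\ge 0$, which gives the theorem on letting $T\to\infty$. Using $\dot e=-u$ to write $G_u(T)=1^Te(0)-1^Te(T)$, this is equivalent to showing that \eqref{eq: explicit policy} minimises the stranded total energy $1^Te(T)$ for all $T$. The mechanism is that the highest-time-to-go-first ordering keeps the largest possible aggregate power available at every instant: a device with large $x_i$ retains substantial power even when run at full rate, whereas a device with small $x_i$ is throttled by \eqref{eq: explicit policy} and thereby kept alive longer, so the fleet is never power-starved earlier than is unavoidable. This is the capacity-preserving ordering property already exploited in the maximal-failure-time analysis of \cite{Evans2017}; the indicator \eqref{eq:indic} extends it to the regime in which some devices are exhausted, which is precisely the regime responsible for nonzero ENS.

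To turn this into a proof I would argue by contradiction using the first overtaking time. Suppose some admissible $\tilde u$ with $1^T\tilde u\le P^r$ satisfies $1^Te^{\tilde u}(T)<1^Te^{*}(T)$ for some $T$, and let $\tau$ be the infimum of such times; by continuity the two remaining-energy curves agree at $\tau$ and $1^Tu^*(\tau^+)<1^T\tilde u(\tau^+)$. Since $u^*$ serves the instantaneous maximum $\min\{P^r,\text{active power}\}$ while $1^T\tilde u\le P^r$, this forces the aggregate power of the devices still holding energy under $u^*$ to be strictly smaller than under $\tilde u$ at $\tau$, even though the two trajectories have delivered \emph{equal} total energy up to $\tau$. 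The decisive step, and the part I expect to be the main obstacle, is to rule this out by showing that, among all trajectories delivering the same cumulative energy, the ordering of \eqref{eq: explicit policy} leaves the aggregate available power maximal; I would establish this through the ordering of the sorted time-to-go vector $x(t)$, proving that the priority rule keeps this vector dominant, in the majorisation sense underlying \cite{Evans2017}, over the state reached by any competing admissible input, whence the active-power comparison and the contradiction follow. As a cross-check on the optimal value, the resulting minimum $\Gamma^*_{\bar p,x}(P^r)$ can be read off from the $E$-$p$ transform and the capacity $\Omega_{\bar p,x}$ of Definition~\ref{definition: capacity}, and it matches the quantity delivered by $u^*$.
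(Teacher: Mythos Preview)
Your reduction to $1^T\tilde u\le P^r$ and the recasting as maximisation of delivered energy are both sound, and the overtaking-time framework is a natural way to organise the argument. However, the step you correctly flag as the main obstacle---that among all admissible trajectories delivering the same cumulative energy by time $\tau$, the policy \eqref{eq: explicit policy} leaves the aggregate available power maximal---is left as a plan rather than a proof. Establishing the required majorisation-type dominance of the sorted time-to-go vector under arbitrary competitors and general references is essentially as hard as the theorem itself; the result in \cite{Evans2017} concerns the regime in which the reference is met throughout, and extending it to the shortfall regime (where devices are driven to exhaustion regardless of the ordering) is precisely the new content here. As written, the proposal reaches the crux and then defers it.

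The paper's proof takes a different and shorter route. It augments the fleet with a virtual store $D_{n+1}$ of zero initial energy and unbounded power, and forces every policy to meet $P^r$ exactly, so that ENS equals the (negative) energy drawn virtually. It then partitions the devices \emph{a posteriori} by their terminal state under the optimal policy: $\mathcal{Q}$ the depleted set (including $D_{n+1}$), $\mathcal{S}$ the survivors. Because the policy allocates in descending time-to-go order and the survivors are, throughout, the highest time-to-go devices, the policy runs $\mathcal{S}$ at every instant at the largest rate any admissible control could, namely $\min\{P^r(t),\sum_{D_i\in\mathcal{S}}\bar p_i\}$. By the exact balance $1^Tu=P^r$, it therefore discharges $\mathcal{Q}$ at the \emph{smallest} rate at every instant, and a single integration finishes the argument. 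No overtaking time, no majorisation, and no comparison of active-power sets is needed: the inequality is pointwise on the $\mathcal{S}$-output, with the partition fixed once and for all by the terminal state. If you wish to complete your route you must prove the active-power dominance claim from scratch; the paper's terminal-partition trick is exactly the shortcut that makes this unnecessary.
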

	\begin{corollary}
		Given any reference $P^r(\cdot)$, the policy \eqref{eq: explicit policy} results in the minimum energy-not-served, $\Gamma^*_{\bar{p},x}(P^r)$.
	\end{corollary}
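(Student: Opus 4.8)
The plan is to obtain the corollary as a short deduction from Theorem~\ref{Theorem V.1}, the only extra ingredient being that the policy's own aggregate output is itself a feasible competitor in the minimisation \eqref{Gamma start def}. First I would verify that \eqref{eq: explicit policy} is admissible, so that $1^T u^*(\cdot)\in\mathcal{F}_{\bar{p},x}$. By construction each fraction satisfies $r_i'\in[0,1]$, whence $u^*(t)\in\mathcal{U}_{\bar{p}}$ pointwise; and the indicator $\mathds{1}[x_{s_i}>0]$ appended in \eqref{eq:indic} forces every group that has exhausted its time-to-go to carry zero power, so that the closed-loop trajectory $z^*(\cdot)$ governed by \eqref{eq:closed-loop dynamics} never becomes negative, i.e. $z^*(t)\geq 0$ for all $t\geq 0$. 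Instantiating Definition~\ref{def:feasible} with $u=u^*$ and $z=z^*$ then exhibits $1^T u^*$ as a feasible reference, so it is an admissible argument of the minimum in \eqref{Gamma start def}.

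Next I would record the reverse correspondence: by Definition~\ref{def:feasible}, every feasible aggregate signal $1^T u\in\mathcal{F}_{\bar{p},x}$ is realised by some admissible control $u$, while conversely every admissible control yields a feasible aggregate output. Hence the feasible delivery profiles ranged over in \eqref{Gamma start def} are exactly the aggregate outputs $1^T\tilde{u}$ of the admissible controls $\tilde{u}$ appearing in Theorem~\ref{Theorem V.1}. The theorem supplies $\Gamma(1^T u^*,P^r)\leq\Gamma(1^T\tilde{u},P^r)$ for every such $\tilde{u}$, so $\Gamma(1^T u^*,P^r)$ is a lower bound for the objective in \eqref{Gamma start def}; since it is also \emph{attained} by the feasible profile $1^T u^*$ established in the first step, it coincides with the minimum $\Gamma^*_{\bar{p},x}(P^r)$.

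I expect the feasibility verification of the first step to be the only real obstacle: one must confirm that the indicator term genuinely prevents over-discharge along the closed loop, rather than merely enforcing the pointwise input bound, so that $u^*$ delivers a bona fide element of $\mathcal{F}_{\bar{p},x}$. With that in hand the corollary is immediate, being nothing more than Theorem~\ref{Theorem V.1} combined with the definitional identification of feasible delivery profiles with admissible controls.
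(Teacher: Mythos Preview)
Your proposal is correct and matches the paper's approach: the paper states this corollary without a separate proof, treating it as immediate from Theorem~\ref{Theorem V.1}, and what you have written is precisely the natural unpacking of that implication---verifying that $1^Tu^*$ is itself a feasible competitor in \eqref{Gamma start def} via the indicator \eqref{eq:indic}, and then identifying the feasible profiles in Definition~\ref{def:feasible} with the admissible controls ranged over in the theorem. There is nothing to add.
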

	\begin{corollary}
		Given any reference $P^r(\cdot)$, the policy \eqref{eq: explicit policy} results in a greedy optimisation of the energy-not-served, i.e.
		\begin{equation}
		\Gamma(1^Tu^*,P^r_{[0,T)})\leq\Gamma(1^T\tilde{u},P^r_{[0,T)})\ \ \forall T\geq0.
		\end{equation}
	\end{corollary}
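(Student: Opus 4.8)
The plan is to reduce the statement to Theorem~\ref{Theorem V.1} applied to the truncated reference $P^r_{[0,T)}$, exploiting two facts: that the truncated energy-not-served sees only the control on $[0,T)$, and that the instantaneous feedback policy \eqref{eq: explicit policy} produces the same control on $[0,T)$ whether it is driven by $P^r$ or by $P^r_{[0,T)}$.

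First I would record the following observation. Since $P^r_{[0,T)}(t)=0$ for $t\geq T$ and any admissible control satisfies $1^Tu(t)\geq 0$, the integrand $\max\{P^r_{[0,T)}(t)-1^Tu(t),0\}$ vanishes on $[T,\infty)$. Hence
\begin{equation*}
\Gamma\big(1^Tu,P^r_{[0,T)}\big)=\int_0^T\max\big\{P^r(t)-1^Tu(t),0\big\}\,dt,
\end{equation*}
so the truncated energy-not-served depends only on the restriction of $u$ to $[0,T)$ and coincides with the shortfall accrued over the window $[0,T)$.

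Next, because $P^r_{[0,T)}$ is itself pointwise finite with finite integral, it is a legitimate reference and Theorem~\ref{Theorem V.1} applies to it verbatim: writing $v^*$ for the output of \eqref{eq: explicit policy} driven by $P^r_{[0,T)}$, we have $\Gamma(1^Tv^*,P^r_{[0,T)})\leq\Gamma(1^T\tilde u,P^r_{[0,T)})$ for every admissible $\tilde u$. It then remains to show $\Gamma(1^Tv^*,P^r_{[0,T)})=\Gamma(1^Tu^*,P^r_{[0,T)})$, for which — by the observation above — it suffices that $v^*$ and $u^*$ coincide on $[0,T)$. This is where the causal, memoryless structure of the feedback law is essential: $u^*(x,P^r)$ in \eqref{eq: explicit policy} depends solely on the present state and present reference value, so the closed-loop dynamics \eqref{eq:closed-loop dynamics} driven by $P^r$ and by $P^r_{[0,T)}$ are identical on $[0,T)$, where the two references agree. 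Invoking uniqueness of the closed-loop trajectory, the state trajectories — and hence $u^*$ and $v^*$ — coincide there, which gives $\Gamma(1^Tu^*,P^r_{[0,T)})\leq\Gamma(1^T\tilde u,P^r_{[0,T)})$, as required.

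I expect the main obstacle to be the rigorous justification of the last step, namely that the policy output is unchanged on $[0,T)$ under truncation of the reference. While this is intuitively immediate from the feedback form, it relies on uniqueness of solutions of \eqref{eq:closed-loop dynamics}, which must be established — or borrowed from the well-posedness already underlying Theorem~\ref{Theorem V.1} — given that the closed-loop vector field is only piecewise-defined and discontinuous in the state. Everything else reduces to the two elementary facts above together with a direct invocation of the theorem.
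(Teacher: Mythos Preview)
Your proposal is correct and matches the paper's intended approach: the paper states this corollary without proof, treating it as immediate from Theorem~\ref{Theorem V.1} applied to the truncated reference together with the causal, state-feedback nature of \eqref{eq: explicit policy}. You have simply made explicit the two ingredients (dependence of the truncated ENS only on $u|_{[0,T)}$, and coincidence of $u^*$ and $v^*$ on $[0,T)$) that the paper leaves implicit, and your caveat about well-posedness of \eqref{eq:closed-loop dynamics} is a fair point that the paper also takes for granted.
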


	As a consequence of Theorem~\ref{Theorem V.1}, our policy should unambiguously be utilised whenever minimisation of energy-not-served is the objective of a grid operator or aggregator, regardless of the risk appetite of this entity (under the assumptions discussed above). In particular, this applies to system studies into the contribution of energy storage to capacity adequacy.
	
	\subsection{Capability assessments in the $E$-$p$ space}
	\label{sec: cap assess}
	As in \cite{Evans2018}, we here use our results on optimality to make fleet capability assessments in the $E$-$p$ space; this time in terms of energy-not-served. We define the \textit{max energy gap} and find a signal that has an energy-not-served equal to this value as follows. 
	Note that in the interest of readability the proof of Lemma~\ref{Lemma V.5} can be found in the \nameref{sec:appendix}.
	
	\begin{definition}
		Given a fleet with maximum power vector $\bar{p}$ in initial state $x$, we define the \textbf{max energy gap} of a signal $P^r(\cdot)$ to be the largest domination of the capacity curve by its $E$-$p$ transform, i.e.
		\begin{equation}
		\Phi_{\bar{p},x}(P^r)\doteq\sup_{p\geq0}\big[\max\{E_{P^r}(p)-\Omega_{\bar{p},x}(p),0\}\big].
		\end{equation}
	\end{definition}

	\noindent Given a reference $P^r(\cdot)$, we define a capped signal with level $\tilde{p}$ as $\bar{P}^r(\cdot)$, constructed according to
	\begin{equation}
	\bar{P}^r(t)=\min\{P^r(t),\ \tilde{p}\}.
	\label{eq: capped signal}
	\end{equation}
	
	\begin{lemma}
		\label{Lemma V.5}
		The capped signal $\bar{P}^r(\cdot)$ defined by $P^r(\cdot)$ fulfilling the following condition at $\tilde{p}$:
			\begin{equation}
			E_{P^r}(\tilde{p})=\Phi_{\bar{p},x}(P^r)
			\label{eq: p_tilde}
			\end{equation}
		results in an energy-not-served equal to the max energy gap, i.e. $\Gamma(\bar{P}^r,P^r)=\Phi_{\bar{p},x}(P^r)$. Moreover, this signal has an $E$-$p$ transform equal to
		\begin{equation}
		E_{\bar{P}^r}(p)=\max\{E_{P^r}(p)-\Phi_{\bar{p},x}(P^r),0\}.
		\end{equation}
	\end{lemma}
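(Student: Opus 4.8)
The plan is to prove both claims by direct manipulation of the integral definitions, exploiting the fact that capping a signal at level $\tilde{p}$ removes exactly the energy lying above $\tilde{p}$. For the first claim I would start from the definition of energy-not-served with $1^Tu=\bar{P}^r$ and record the pointwise identity $P^r(t)-\bar{P}^r(t)=P^r(t)-\min\{P^r(t),\tilde{p}\}=\max\{P^r(t)-\tilde{p},0\}$, which is immediate from \eqref{eq: capped signal}. Since this quantity is already nonnegative, substituting it into $\Gamma(\bar{P}^r,P^r)=\int_0^\infty\max\{P^r(t)-\bar{P}^r(t),0\}\,dt$ collapses the outer maximum and gives $\Gamma(\bar{P}^r,P^r)=\int_0^\infty\max\{P^r(t)-\tilde{p},0\}\,dt=E_{P^r}(\tilde{p})$. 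The defining condition \eqref{eq: p_tilde} then yields $\Gamma(\bar{P}^r,P^r)=\Phi_{\bar{p},x}(P^r)$, establishing the first part.

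For the second claim I would split on the position of $p$ relative to $\tilde{p}$. When $p\geq\tilde{p}$, the capped signal obeys $\bar{P}^r(t)\leq\tilde{p}\leq p$ for all $t$, so $E_{\bar{P}^r}(p)=0$; simultaneously, monotonicity of the $E$-$p$ transform gives $E_{P^r}(p)\leq E_{P^r}(\tilde{p})=\Phi_{\bar{p},x}(P^r)$, so the right-hand side also vanishes. When $p<\tilde{p}$, I would establish the pointwise identity
\[
\max\{\bar{P}^r(t)-p,0\}=\max\{P^r(t)-p,0\}-\max\{P^r(t)-\tilde{p},0\}
\]
by checking the three regimes $P^r(t)\leq p$, $p<P^r(t)\leq\tilde{p}$ and $P^r(t)>\tilde{p}$ separately. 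Integrating over $t$ then gives $E_{\bar{P}^r}(p)=E_{P^r}(p)-E_{P^r}(\tilde{p})=E_{P^r}(p)-\Phi_{\bar{p},x}(P^r)$, and since $p<\tilde{p}$ forces $E_{P^r}(p)\geq\Phi_{\bar{p},x}(P^r)$ by monotonicity, this equals $\max\{E_{P^r}(p)-\Phi_{\bar{p},x}(P^r),0\}$, as required.

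Two preliminaries underpin the argument. First, a level $\tilde{p}$ satisfying \eqref{eq: p_tilde} must exist: $E_{P^r}$ is continuous and nonincreasing with $E_{P^r}(p)\to0$ as $p\to\infty$ and $E_{P^r}(0)=\int_0^\infty P^r(t)\,dt$, while $0\leq\Phi_{\bar{p},x}(P^r)\leq E_{P^r}(0)$ (the upper bound because $\Omega_{\bar{p},x}\geq0$), so the intermediate value theorem applies; these regularity properties of the $E$-$p$ transform are available from \cite{Evans2018}. Second, the monotonicity of $E_{P^r}$ is invoked twice to discharge the outer maximum on the right-hand side in both branches. I expect the main obstacle to be nothing deep, but rather the bookkeeping of the three-case pointwise identity together with care that the chosen $\tilde{p}$ remains well defined when $E_{P^r}$ has flat segments; in that event any admissible $\tilde{p}$ suffices, since the argument uses only the value $E_{P^r}(\tilde{p})=\Phi_{\bar{p},x}(P^r)$ and not uniqueness of the cap level.
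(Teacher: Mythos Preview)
Your proposal is correct and follows essentially the same approach as the paper: both arguments split on the position of $p$ relative to $\tilde{p}$, verify the same three-case pointwise identity $\max\{\bar{P}^r(t)-p,0\}=\max\{P^r(t)-p,0\}-\max\{P^r(t)-\tilde{p},0\}$ for $p\leq\tilde{p}$, and integrate. The only cosmetic difference is ordering: you establish $\Gamma(\bar{P}^r,P^r)=E_{P^r}(\tilde{p})$ directly from the pointwise identity $P^r-\bar{P}^r=\max\{P^r-\tilde{p},0\}$, whereas the paper first derives the $E$-$p$ formula and then reads off the ENS as $E_{P^r}(0)-E_{\bar{P}^r}(0)$; your route is marginally more direct for that half but the substance is the same.
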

	
	\noindent We are then able to derive the following result, the proof of which can also be found in the \nameref{sec:appendix}:
	\begin{theorem}
		\label{Theorem V.6}
		For any given reference $P^r(\cdot)$, the max energy gap is equal to the minimum energy-not-served, i.e. $\Gamma_{\bar{p},x}^*(P^r)=\Phi_{\bar{p},x}(P^r)$.
	\end{theorem}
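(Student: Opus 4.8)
The plan is to establish the equality by proving the two inequalities $\Gamma^*_{\bar p,x}(P^r)\le\Phi_{\bar p,x}(P^r)$ and $\Gamma^*_{\bar p,x}(P^r)\ge\Phi_{\bar p,x}(P^r)$ separately, working throughout in the $E$-$p$ domain and invoking the feasibility characterisation from \cite{Evans2018}: a signal $g(\cdot)$ lies in $\mathcal{F}_{\bar p,x}$ if and only if $E_g(p)\le\Omega_{\bar p,x}(p)$ for every $p\ge0$. Since $\Gamma^*$ is a minimum over feasible aggregate outputs, one inequality is an \emph{achievability} claim and the other a \emph{lower-bound} claim.

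For the upper bound I would use the capped signal $\bar P^r(\cdot)$ supplied by Lemma~\ref{Lemma V.5}, whose transform is $E_{\bar P^r}(p)=\max\{E_{P^r}(p)-\Phi_{\bar p,x}(P^r),0\}$. By definition of the supremum, $\Phi_{\bar p,x}(P^r)\ge E_{P^r}(p)-\Omega_{\bar p,x}(p)$ for every $p$, so $E_{P^r}(p)-\Phi_{\bar p,x}(P^r)\le\Omega_{\bar p,x}(p)$; together with $\Omega_{\bar p,x}(p)\ge0$ this shows both arguments of the maximum are bounded above by $\Omega_{\bar p,x}(p)$, hence $E_{\bar P^r}(p)\le\Omega_{\bar p,x}(p)$ for all $p$ and $\bar P^r(\cdot)$ is feasible. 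Choosing $u(\cdot)$ with $1^Tu=\bar P^r$ and combining the identity $\Gamma(\bar P^r,P^r)=\Phi_{\bar p,x}(P^r)$ from Lemma~\ref{Lemma V.5} with the definition of $\Gamma^*$ gives $\Gamma^*_{\bar p,x}(P^r)\le\Phi_{\bar p,x}(P^r)$.

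For the lower bound I would take an arbitrary feasible dispatch with aggregate output $g(\cdot)=1^Tu(\cdot)$ and establish the pointwise inequality $\max\{P^r(t)-g(t),0\}\ge\max\{P^r(t)-p,0\}-\max\{g(t)-p,0\}$, valid for every $t$ and every $p\ge0$; this follows from a short case split on the orderings of $P^r(t)$, $g(t)$ and $p$, or more compactly by noting that $s\mapsto\max\{s-p,0\}$ is non-decreasing and $1$-Lipschitz. Integrating over $t$ yields $\Gamma(g,P^r)\ge E_{P^r}(p)-E_g(p)$, and feasibility of $g$ supplies $E_g(p)\le\Omega_{\bar p,x}(p)$, so $\Gamma(g,P^r)\ge E_{P^r}(p)-\Omega_{\bar p,x}(p)$. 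As the left-hand side is also non-negative, it dominates $\max\{E_{P^r}(p)-\Omega_{\bar p,x}(p),0\}$ for every $p$; taking the supremum over $p$ gives $\Gamma(g,P^r)\ge\Phi_{\bar p,x}(P^r)$, and minimising over feasible $g$ produces $\Gamma^*_{\bar p,x}(P^r)\ge\Phi_{\bar p,x}(P^r)$. The two bounds together give the claim.

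The step I expect to be the main obstacle is the lower bound, and specifically the passage from the time-domain shortfall integral to the $E$-$p$-domain quantities through the pointwise inequality above: its verification must correctly account for every relative ordering of $P^r(t)$, $g(t)$ and the cut level $p$, and it is precisely this inequality that couples the definition of energy-not-served to the excess-energy transforms on which $\Phi_{\bar p,x}$ is built. By contrast, the feasibility of the capped signal and the supremum manipulation in the upper bound are comparatively routine once Lemma~\ref{Lemma V.5} and the characterisation of $\mathcal{F}_{\bar p,x}$ are available.
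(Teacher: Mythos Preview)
Your proposal is correct. The upper bound is handled exactly as in the paper: invoke Lemma~\ref{Lemma V.5} to produce the feasible capped signal $\bar P^r$ with $\Gamma(\bar P^r,P^r)=\Phi_{\bar p,x}(P^r)$, whence the minimum is at most $\Phi_{\bar p,x}(P^r)$.

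For the lower bound you take a genuinely different, and somewhat more elementary, route than the paper. The paper fixes an optimal feasible signal $P^{r*}$, chooses $p^*$ attaining the supremum in $\Phi_{\bar p,x}$, uses feasibility to bound $E_{P^{r*}}(p^*)$, and then compares the derivatives
\[
\frac{d}{dp}E_{P^{r*}}(p)=-\mu\big(\{\tau:P^{r*}(\tau)\ge p\}\big)\ \ge\ -\mu\big(\{\tau:P^{r}(\tau)\ge p\}\big)=\frac{d}{dp}E_{P^{r}}(p),
\]
integrating down from $p^*$ to $0$ to obtain $E_{P^{r*}}(0)\le E_{P^{r}}(0)-\Phi_{\bar p,x}(P^r)$. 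This derivative comparison implicitly relies on $P^{r*}\le P^r$ pointwise (so that the level sets are nested), a harmless but unstated WLOG. Your argument sidesteps this: the pointwise inequality $\max\{P^r-g,0\}\ge\max\{P^r-p,0\}-\max\{g-p,0\}$ holds for \emph{any} feasible $g$, without assuming $g\le P^r$, and integrating it gives $\Gamma(g,P^r)\ge E_{P^r}(p)-E_g(p)\ge E_{P^r}(p)-\Omega_{\bar p,x}(p)$ directly for every $p$. Taking the supremum over $p$ and then the infimum over $g$ finishes. The trade-off is that the paper's measure-theoretic derivative argument is perhaps more illuminating about how the $E$-$p$ curves relate geometrically, while your Lipschitz/monotonicity argument is shorter, avoids differentiation, and does not need the implicit pointwise ordering assumption.
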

	\noindent Combination of Lemma~\ref{Lemma V.5} and Theorems~\ref{Theorem V.1} and \ref{Theorem V.6} then leads to the following Corollaries:
	\begin{corollary}
		The max energy gap $\Phi_{\bar{p},x}(P^r)$ is equal to the energy-not-served under the policy \eqref{eq: explicit policy}.
	\end{corollary}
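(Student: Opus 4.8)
The plan is to chain together the two optimality characterisations already established, so that this Corollary reduces to a short transitivity argument requiring no new estimates. First I would invoke the Corollary to Theorem~\ref{Theorem V.1}: since the aggregate output $1^Tu^*(\cdot)$ generated by the policy \eqref{eq: explicit policy} respects both the input constraint $u(t)\in\mathcal{U}_{\bar{p}}$ and the state constraint $z^*(t)\geq0$ by construction, it is itself a feasible output, i.e. $1^Tu^*(\cdot)\in\mathcal{F}_{\bar{p},x}$. Theorem~\ref{Theorem V.1} then guarantees that its energy-not-served is no greater than that of any admissible alternative, so the policy actually attains the minimum in \eqref{Gamma start def} rather than merely bounding it, giving
\begin{equation}
\Gamma(1^Tu^*,P^r)=\Gamma^*_{\bar{p},x}(P^r).
\end{equation}

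Next I would apply Theorem~\ref{Theorem V.6}, which identifies this minimum energy-not-served with the max energy gap, $\Gamma^*_{\bar{p},x}(P^r)=\Phi_{\bar{p},x}(P^r)$. Substituting this equality into the previous display yields $\Gamma(1^Tu^*,P^r)=\Phi_{\bar{p},x}(P^r)$, which is precisely the asserted identity between the max energy gap and the energy-not-served incurred under the policy. Note that Lemma~\ref{Lemma V.5} is not logically required for this particular statement, as it serves to exhibit a concrete witness signal attaining the gap; the equality here follows directly from the two theorems.

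In short, the argument is purely a composition of prior results. The only point requiring a moment's care is the verification that the closed-loop output genuinely lies in the feasible set $\mathcal{F}_{\bar{p},x}$, so that it is eligible to realise the minimum defining $\Gamma^*_{\bar{p},x}(P^r)$; this is immediate from the fact that \eqref{eq: explicit policy} never requests power beyond $\bar{p}$ nor draws from a depleted device, the latter being enforced by the indicator in \eqref{eq:indic}, so no constraint is ever violated along the trajectory \eqref{eq:closed-loop dynamics}. I therefore do not anticipate any genuine obstacle beyond this bookkeeping.
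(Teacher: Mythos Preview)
Your proposal is correct and follows essentially the same route as the paper, which merely states that the Corollary arises from the combination of Lemma~\ref{Lemma V.5} and Theorems~\ref{Theorem V.1} and~\ref{Theorem V.6}. Your observation that Lemma~\ref{Lemma V.5} is not \emph{directly} needed here is accurate: it enters only through the proof of Theorem~\ref{Theorem V.6} (via the upper bound in Lemma~\ref{lem fgap1}), and the paper's mention of it is really for the companion peak-shaving corollary; once Theorem~\ref{Theorem V.6} is in hand, your two-step transitivity through the Corollary to Theorem~\ref{Theorem V.1} is all that is required.
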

	\begin{corollary}
		\label{cor: peak shaving}
		The saturation signal $\bar{P}^r(\cdot)$ of level $\tilde{p}$ defined as in \eqref{eq: p_tilde} results in the minimum energy-not-served, i.e.
		\begin{equation}
		\Gamma(\bar{P}^r,P^r)=\Gamma_{\bar{p},x}^*(P^r).
		\end{equation}
	\end{corollary}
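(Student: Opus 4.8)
The plan is to chain the two immediately preceding results. By Lemma~\ref{Lemma V.5}, the capped signal $\bar{P}^r(\cdot)$ at the level $\tilde{p}$ fixed by \eqref{eq: p_tilde} has energy-not-served exactly equal to the max energy gap, $\Gamma(\bar{P}^r,P^r)=\Phi_{\bar{p},x}(P^r)$. By Theorem~\ref{Theorem V.6}, the minimum feasible energy-not-served coincides with that same quantity, $\Gamma^*_{\bar{p},x}(P^r)=\Phi_{\bar{p},x}(P^r)$. Eliminating $\Phi_{\bar{p},x}(P^r)$ between these two identities yields the claimed equality $\Gamma(\bar{P}^r,P^r)=\Gamma^*_{\bar{p},x}(P^r)$. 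This one-line substitution is the entire content of the displayed equation, so no further argument is strictly needed to verify the numerical statement.

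To justify the stronger reading of the corollary---that the peak-shaving target $\bar{P}^r(\cdot)$ is a \emph{genuinely achievable} output realising this minimum, rather than merely a signal whose shortfall happens to match the optimum---I would additionally confirm that $\bar{P}^r(\cdot)\in\mathcal{F}_{\bar{p},x}$. Lemma~\ref{Lemma V.5} supplies its transform, $E_{\bar{P}^r}(p)=\max\{E_{P^r}(p)-\Phi_{\bar{p},x}(P^r),0\}$. From the definition of the max energy gap one has $E_{P^r}(p)-\Omega_{\bar{p},x}(p)\leq\Phi_{\bar{p},x}(P^r)$ for every $p\geq0$, whence $E_{P^r}(p)-\Phi_{\bar{p},x}(P^r)\leq\Omega_{\bar{p},x}(p)$; combining this with $\Omega_{\bar{p},x}(p)\geq0$ (it is itself an $E$-$p$ transform) gives $E_{\bar{P}^r}(p)\leq\Omega_{\bar{p},x}(p)$ for all $p$. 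Invoking the feasibility characterisation of \cite{Evans2018}---that a reference is feasible precisely when its $E$-$p$ transform is dominated by the capacity curve---then establishes $\bar{P}^r(\cdot)\in\mathcal{F}_{\bar{p},x}$.

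I do not expect a genuine obstacle, as this is corollary-level bookkeeping built on the heavy lifting already done in Lemma~\ref{Lemma V.5} and Theorem~\ref{Theorem V.6}. The only point demanding care is the feasibility check, and specifically whether the $E$-$p$ domination criterion from \cite{Evans2018} is available in exactly the form used above. If that criterion is cited as stated it reduces the verification to the short transform inequality; if instead an explicit control must be exhibited to realise $\bar{P}^r(\cdot)$, one would construct it by driving the feedback law \eqref{eq: explicit policy} with $\bar{P}^r(\cdot)$ as reference and appeal to Theorem~\ref{Theorem V.1}, a slightly longer but equally routine route.
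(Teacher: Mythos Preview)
Your proposal is correct and matches the paper's own route: the corollary is stated there as an immediate consequence of combining Lemma~\ref{Lemma V.5} with Theorem~\ref{Theorem V.6} (and Theorem~\ref{Theorem V.1}, which is only needed for the companion corollary). The feasibility check you add is also present in the paper, but it is folded into the proof of Lemma~\ref{Lemma V.5} rather than repeated at the corollary level.
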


	\noindent Making use of Corollary~\ref{cor: peak shaving}, we are then able to form an alternative policy that is energy-optimal: the \emph{peak-shaving} policy allocated as follows. Find $\tilde{p}$ satisfying \eqref{eq: p_tilde}, then compose the capped signal $\bar{P}^r(\cdot)$ according to \eqref{eq: capped signal} and implement the policy \eqref{eq: explicit policy} to meet $\bar{P}^r(\cdot)$. Note, however, that this policy is non-causal; it requires perfect foresight of the reference in order to determine the saturation level $\tilde{p}$. As a result it is in general only energy optimal with respect to the full reference signal, unlike the policy \eqref{eq: explicit policy} which is energy optimal in a greedy sense. The cumulative energy delivered under the peak-shaving policy can therefore be seen to ``catch up" with the greedy optimal output as time progresses.
	
	\section{Discrete time policy}
	\subsection{Piecewise constant policy}
	Thus far the proposed policy has been posed in continuous time, however we envision that discrete settings would be more relevant in practice. Examples of such settings might include market clearing or dispatch with fixed intervals, or simulations with time steps. We consider piecewise constant signals and assume that there is a discrete clock indexed by $t_i$; often such sample instants will be equidistant but they need not be. We firstly focus on a single time interval $[t_1,t_2)$. Given a reference that is constant across this time interval, we then find the final state under the policy \eqref{eq: explicit policy} and construct a constant input that reaches it in the same time as follows. For clarity of argument the proof of Lemmas~\ref{Lemma VI.1} and \ref{Lemma VI.2} can be found in the \nameref{sec:appendix}.
	\begin{lemma}
		\label{Lemma VI.1}
		Given a starting state $x$ at time $t_1$ and a reference signal $P^r(\cdot)$ that is constant across the time interval $[t_1,t_2)$, i.e. such that $P^r(\tau)=P^r\ \forall \tau\in[t_1,t_2)$, the state resulting from the implementation of the policy \eqref{eq: explicit policy} across this interval, $z\doteq z^*(t_2)$, can be found as follows:
		\begin{equation}
		z_i=\min\big\{\max\big\{x_i-\Delta t,\hat{z}\big\},x_i\big\},
		\label{eq:x_star_calc2}
		\end{equation}
		in which
		\begin{equation}
		\hat{z}\doteq\inf\bigg\{\hat{x}\geq0\colon\sum_{i=1}^n\bar{p}_i\max\big\{\min\big\{x_i-\hat{x},\Delta t\big\},0\big\}\leq P^r\Delta t\bigg\}
		\label{eq:x_star_calc}
		\end{equation}
		and $\Delta t$ is the length of the interval, $\Delta t\doteq t_2-t_1$.	
	\end{lemma}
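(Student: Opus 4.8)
The plan is to follow the closed-loop trajectory $z^*(\cdot)$ generated by \eqref{eq: explicit policy} across the interval, reparametrised by elapsed time $\tau=t-t_1\in[0,\Delta t]$, and to show it coincides with the explicit candidate obtained by putting $\tau$ in place of $\Delta t$ in \eqref{eq:x_star_calc2}--\eqref{eq:x_star_calc}. First I would record the structural features of the policy that make this tractable: states are non-increasing, the descending order of \eqref{eq: explicit policy} is preserved for all $\tau$ (a strictly higher device is discharged at least as fast, so it cannot be overtaken, and once two states coincide they stay equal), and at each instant the discharging devices are exactly the top groups---those strictly above the marginal level run at full power ($\dot x_i=-1$), the marginal group descends at the common fractional rate $r$ of \eqref{eq: explicit policy}, and the rest are idle. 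This makes the trajectory piecewise-linear, with transitions confined to the finitely many instants where the active set changes (a group merges with the marginal set, or a device depletes) or where the meetable regime gives way to the unmeetable one.

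Next I would introduce the time-dependent threshold $\hat z(\tau)$, defined as in \eqref{eq:x_star_calc} but with $\tau$ replacing $\Delta t$, together with the candidate $z_i(\tau)=\min\{\max\{x_i-\tau,\hat z(\tau)\},x_i\}$, and check $z(0)=x$. Writing $g_\tau(\hat x)\doteq\sum_i\bar p_i\max\{\min\{x_i-\hat x,\tau\},0\}$, this map is jointly continuous and strictly decreasing in $\hat x$ wherever a device is active, so $\hat z(\tau)$---and hence $z(\tau)$---is continuous in $\tau$. The candidate partitions the devices into the capped set $\{x_i\ge\hat z(\tau)+\tau\}$, dropping by $\tau$ and sitting strictly above $\hat z(\tau)$; the marginal set $\{\hat z(\tau)<x_i<\hat z(\tau)+\tau\}$, all currently at the level $\hat z(\tau)$; and the idle set $\{x_i\le\hat z(\tau)\}$---which reproduces exactly the three roles above.

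The core of the argument is to verify that this candidate satisfies the closed-loop dynamics \eqref{eq:closed-loop dynamics} on each phase. When the fleet can currently meet $P^r$ we have $\hat z(\tau)>0$ and the infimum is attained with equality, $g_\tau(\hat z(\tau))=P^r\tau$; differentiating this identity on a phase, where the three sets are fixed, gives $-\dot{\hat z}=\frac{P^r-\sum_{\text{capped}}\bar p_i}{\sum_{\text{marginal}}\bar p_i}$, which is precisely the fraction $r$ that \eqref{eq: explicit policy} assigns to the marginal group. Hence the marginal devices descend at rate $r$, the capped devices at rate $1$, and the idle devices not at all, matching $-P^{-1}u^*$ term by term. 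In the complementary regime the infimum forces $\hat z(\tau)=0$, the candidate collapses to $z_i=\max\{x_i-\tau,0\}$, and the matching asserts only that every undepleted device runs at full power---exactly what \eqref{eq: explicit policy} does once $P^r$ cannot be met. Continuity of $z(\tau)$ at the phase boundaries then patches the phases together, so the candidate is the closed-loop solution on all of $[0,\Delta t]$, and evaluation at $\tau=\Delta t$ yields \eqref{eq:x_star_calc2}.

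I expect the main obstacle to be the non-smooth bookkeeping rather than any single estimate: bounding the number of phases, establishing continuity of $\hat z(\tau)$ across merges and across the boundary where the meetable regime gives way to the unmeetable one, and justifying that the candidate---shown to solve the discontinuous closed-loop system almost everywhere with the correct initial value---is the trajectory actually produced by \eqref{eq: explicit policy} (either by appeal to the order-preserving, monotone structure of the feedback, or equivalently by running the same differentiation argument directly on $z^*$). Depletion, where the indicator \eqref{eq:indic} drops a device from the active set exactly as $x_i$ reaches $0$ while the candidate's idle set absorbs it at $\hat z(\tau)=0$, is the most delicate point but becomes routine once the threshold is known to be continuous.
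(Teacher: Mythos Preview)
Your proposal is correct, and in fact more carefully argued than the paper's own proof, but it takes a different route. The paper does not track the trajectory in time; instead it splits into three cases according to the size of $P^r$ (zero, intermediate, or too large to be met), and in the interesting intermediate case it looks directly at the \emph{endpoint} $z=z^*(t_2)$: defining $\hat{x}\doteq\min\{z_i:z_i<x_i\}$, it asserts that every device must fall into one of three bins---those ending strictly above $\hat{x}$ (ran at full power throughout, so $z_i=x_i-\Delta t$), those at $\hat{x}$, and those strictly below (never ran, so $z_i=x_i$)---and reads off \eqref{eq:x_star_calc2}. The identification of this $\hat{x}$ with the infimum $\hat{z}$ of \eqref{eq:x_star_calc} is left implicit, presumably via the energy balance $\sum_i\bar p_i(x_i-z_i)=P^r\Delta t$.

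Your approach instead constructs the full trajectory via a time-dependent threshold $\hat z(\tau)$, differentiates the energy-balance identity $g_\tau(\hat z(\tau))=P^r\tau$ to recover the marginal fraction $r$ of \eqref{eq: explicit policy}, and verifies the closed-loop ODE phase by phase. This is more laborious---you correctly flag the non-smooth bookkeeping at merges and depletions as the main cost---but it buys you an explicit verification that the candidate solves \eqref{eq:closed-loop dynamics} and a clean link between $\hat z$ and the energy balance, both of which the paper's proof takes for granted. The underlying structural facts (order preservation, the three roles of devices) are the same in both arguments; the paper simply invokes them at the terminal time rather than along the whole interval.
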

	\begin{lemma}
		\label{Lemma VI.2}
		Given an initial state $x\doteq x(t_1)$ and a reference signal $P^r(\cdot)$ such that $P^r(\tau)=P^r\ \forall \tau\in[t_1,t_2)$, the final state under the implementation of the policy \eqref{eq: explicit policy}, $z\doteq z^*(t_2)$, can be reached by a constant control signal allocated as
		\begin{equation}
		u=P\cdot\bigg(\frac{x-z}{t_2-t_1}\bigg)
		\label{eq:u allocation}
		\end{equation}
		in the same time. This signal is feasible across the interval.
	\end{lemma}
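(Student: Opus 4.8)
The plan is to treat the two claims separately: that the constant control \eqref{eq:u allocation} drives the state from $x$ to $z$ over the interval, and that it does so feasibly. The first is immediate. Under a constant input $u$ the integrator dynamics $\dot{z}=-P^{-1}u$ integrate to $z(t_2)=x-P^{-1}u(t_2-t_1)$; since $P=\text{diag}(\bar p)$ is invertible, solving $z(t_2)=z$ for $u$ yields precisely \eqref{eq:u allocation}, so this input reaches $z$ at time $t_2$, i.e.\ in the same elapsed time $\Delta t$ as the policy \eqref{eq: explicit policy}.

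The substance is feasibility, which amounts to two requirements: the input constraint $u\in\mathcal{U}_{\bar p}$ must hold throughout the interval, and the state must satisfy $z(t)\geq 0$ for all $t\in[t_1,t_2]$. For the input, substituting \eqref{eq:u allocation} gives $u_i=\bar p_i(x_i-z_i)/\Delta t$, so $u_i\in[0,\bar p_i]$ if and only if $0\leq x_i-z_i\leq\Delta t$. For the state, I would observe that with $u$ constant and $u_i\geq 0$ each component of the trajectory is affine and non-increasing in $t$; hence its minimum over $[t_1,t_2]$ is attained at the right endpoint, and $z(t)\geq 0$ throughout reduces to $z=z(t_2)\geq 0$. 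Both requirements are therefore bounds on the $z_i$ furnished by Lemma~\ref{Lemma VI.1}.

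These bounds I would extract from the closed form \eqref{eq:x_star_calc2} via the simplification
\begin{equation*}
x_i-z_i=\max\big\{\min\{x_i-\hat{z},\Delta t\},0\big\},
\end{equation*}
obtained using $c-\min\{y,c\}=\max\{c-y,0\}$ followed by $c-\max\{a,b\}=\min\{c-a,c-b\}$ with $a=x_i-\Delta t$, $b=\hat{z}$, $c=x_i$. The right-hand side is a maximum with $0$, hence nonnegative, while its inner minimum is at most $\Delta t$; this yields $0\leq x_i-z_i\leq\Delta t$ and so the input constraint at once. For state nonnegativity I would note that $\hat{z}\geq 0$: by \eqref{eq:x_star_calc} it is the infimum of a set of nonnegative reals that is nonempty, since every sufficiently large $\hat{x}$ makes the summed term vanish. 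Combined with $x_i\geq 0$ this gives $\max\{\min\{x_i-\hat{z},\Delta t\},0\}\leq\max\{x_i-\hat{z},0\}\leq x_i$, whence $z_i\geq 0$.

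The main obstacle is recognising the clamped rewriting of the nested min--max in \eqref{eq:x_star_calc2}; once it is in hand the two feasibility conditions fall out directly, and the only remaining care is in justifying $\hat{z}\geq 0$ and in reducing the pointwise state constraint to its endpoint by monotonicity of the affine trajectory.
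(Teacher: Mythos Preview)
Your proof is correct, but it takes a different route from the paper's. The paper dispatches both claims in one line by observing that
\[
(t_2-t_1)\,u \;=\; P(x-z)\;=\;-P\!\int_{t_1}^{t_2}\dot z^*(\tau)\,d\tau\;=\;\int_{t_1}^{t_2}u^*(\tau)\,d\tau,
\]
so the constant input is exactly the time-average of the policy input $u^*(\cdot)$ over the interval. Since $u^*(\tau)\in\mathcal{U}_{\bar p}$ for every $\tau$ and $\mathcal{U}_{\bar p}$ is a box (hence convex), the average also lies in $\mathcal{U}_{\bar p}$; state nonnegativity then follows as in your argument from $u_i\geq 0$ and $z\geq 0$. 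Your approach instead reads the bounds $0\leq x_i-z_i\leq\Delta t$ and $z_i\geq 0$ directly off the closed form \eqref{eq:x_star_calc2} of Lemma~\ref{Lemma VI.1}. The averaging argument is shorter, does not rely on the explicit formula for $z$, and would apply verbatim to any (not necessarily constant) reference once the endpoints are fixed; your argument, on the other hand, is more explicit about the state constraint and makes the dependence on $\hat z\geq 0$ transparent, which is useful when one later rewrites the policy as \eqref{eq:DTOP}.
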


	\noindent Thus Lemmas~\ref{Lemma VI.1} and \ref{Lemma VI.2} allow us to form a constant input that returns the optimal final state given a constant reference. This is constructed according to
	\begin{equation}
	u_i=\bar{p}_i\cdot\textnormal{max}\bigg\{\textnormal{min}\bigg\{\frac{x_i-\hat{z}}{\Delta t},1\bigg\},0\bigg\},
	\label{eq:DTOP}
	\end{equation}
	in which $\Delta t\doteq t_2{-}t_1$ and $\hat{z}$ is allocated according to \eqref{eq:x_star_calc}. The piecewise constant policy is then simply formed by implementing \eqref{eq:DTOP} from each time instant to the next. This policy satisfies the following results:	
	\begin{theorem}
		\label{the discretePolicy}
		The piecewise constant policy results in a trajectory that is optimal at all $t_i$.
	\end{theorem}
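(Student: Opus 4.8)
The plan is to read ``optimal at all $t_i$'' as the claim that the state produced by the piecewise constant policy at each sample instant $t_i$ equals the state $z^*(t_i)$ of the continuous-time policy \eqref{eq: explicit policy}, which is energy-optimal by Theorem~\ref{Theorem V.1}. The plan is then to prove this equality by induction on the interval index, using Lemmas~\ref{Lemma VI.1} and~\ref{Lemma VI.2} to dispatch a single interval.

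The base case is immediate, as both the optimal continuous trajectory and the piecewise constant trajectory start from the common initial state $x(0)$. For the inductive step I would assume the two states coincide at $t_i$, denote this common state by $x$, and consider the interval $[t_i,t_{i+1})$ of length $\Delta t$, over which the reference is constant by hypothesis. Lemma~\ref{Lemma VI.1} identifies the state $z^*(t_{i+1})$ reached from $x$ by the continuous policy via \eqref{eq:x_star_calc2}. I would then verify that the constant input \eqref{eq:DTOP} defining the piecewise constant policy is precisely the control \eqref{eq:u allocation} of Lemma~\ref{Lemma VI.2}: applying the identities $a-\min\{b,a\}=\max\{a-b,0\}$ and $a-\max\{b,c\}=\min\{a-b,a-c\}$ to \eqref{eq:x_star_calc2} yields $x_i-z_i=\max\{\min\{x_i-\hat z,\Delta t\},0\}$, and dividing by $\Delta t$ turns $u_i=\bar p_i(x_i-z_i)/\Delta t$ into exactly \eqref{eq:DTOP}. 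Lemma~\ref{Lemma VI.2} then guarantees that this input is feasible and drives the state to the same $z^*(t_{i+1})$ in the same elapsed time, so the two trajectories agree at $t_{i+1}$ and the induction closes.

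The step I expect to demand the most care is the justification that these per-interval matchings concatenate into a global one. This relies on \eqref{eq: explicit policy} being a pure state feedback with no dependence on history: over any interval on which the reference is constant, the resulting continuous trajectory is a function of the state at the left endpoint alone. Consequently the globally-evolved optimal trajectory, restricted to $[t_i,t_{i+1})$, is identical to the trajectory launched afresh from the matched state $z^*(t_i)$---which is exactly the object treated by Lemma~\ref{Lemma VI.1}. Were the policy not memoryless, knowing the state at $t_i$ would not suffice to determine the optimal evolution on the ensuing interval, and the inductive step would break.

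Finally, since the discrete trajectory coincides with $z^*(t_i)$ at every sample instant, over each interval it discharges exactly the same energy $\sum_j\bar p_j\,(z_j^*(t_i)-z_j^*(t_{i+1}))$ as the optimal policy; as neither policy ever lets the delivered power exceed the reference, the two accumulate identical energy-not-served up to each $t_i$. The greedy optimality accompanying Theorem~\ref{Theorem V.1} then certifies that the discrete trajectory is optimal at all $t_i$, completing the argument.
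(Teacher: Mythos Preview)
Your proposal is correct and follows essentially the same approach as the paper: the paper's proof is the single line ``Follows directly from Theorem~\ref{Theorem V.1} and Lemmas~\ref{Lemma VI.1} and~\ref{Lemma VI.2}'', and your write-up is a faithful unpacking of that sentence via induction on the sample instants, together with the observation that matching states at each $t_i$ forces matching energy-not-served. The algebraic verification that \eqref{eq:DTOP} coincides with the constant input of Lemma~\ref{Lemma VI.2}, and the remark that the feedback is memoryless so the per-interval results concatenate, are exactly the details the paper suppresses.
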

	\begin{proof}
			Follows directly from Theorem~\ref{Theorem V.1} and Lemmas~\ref{Lemma VI.1} and \ref{Lemma VI.2}.
	\end{proof}
	\begin{corollary}
		In a discrete time system the piecewise constant policy is optimal.
	\end{corollary}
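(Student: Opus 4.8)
The plan is to reduce the notion of optimality in a discrete time system to the statement already furnished by Theorem~\ref{the discretePolicy}, namely optimality of the trajectory at every sample instant $t_i$. The essential observation is that in a genuinely discrete time system the reference $P^r(\cdot)$ is piecewise constant, taking a constant value on each interval $[t_i,t_{i+1})$, and the system is only ever observed, controlled, and evaluated at the clock instants $t_i$. Consequently the only quantity that can enter an optimality criterion is the per-interval energy balance, and hence the sequence of sampled states.

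First I would make the energy accounting explicit. Using $e_i=\bar p_i x_i$ and the integrator dynamics $\dot e_i=-u_i$, the energy delivered by the fleet across an interval $[t_i,t_{i+1})$ equals $\sum_{j=1}^n \bar p_j\big(z^*_j(t_i)-z^*_j(t_{i+1})\big)$, which depends only on the sampled states at the two endpoints. The discrete-time energy-not-served is therefore the sum over intervals of $\max\{P^r\,\Delta t_i-\sum_{j}\bar p_j(z^*_j(t_i)-z^*_j(t_{i+1})),0\}$, a functional of the sampled trajectory alone.

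Next I would invoke Theorem~\ref{the discretePolicy}: the piecewise constant policy produces a trajectory that coincides, at every $t_i$, with the trajectory generated by the continuous-time policy \eqref{eq: explicit policy}. Since the discrete energy-not-served depends only on these sampled states, the two policies incur identical discrete-time energy-not-served. By Theorem~\ref{Theorem V.1} the continuous-time policy attains the minimum $\Gamma^*_{\bar p,x}(P^r)$, so the piecewise constant policy attains this minimum as well, establishing the claim.

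The subtle point to guard against is that, within an interval, the constant control of \eqref{eq:DTOP} differs pointwise from the continuous-time optimal control, so their instantaneous shortfall profiles are not the same; one must argue that this intra-interval behaviour is immaterial in a discrete time system, where only the endpoint states---and hence the per-interval energy totals---are observable. Feasibility of the constant control over the full interval, guaranteed by Lemma~\ref{Lemma VI.2}, is what ensures that the delivered energy is exactly the state difference $\sum_j \bar p_j(z^*_j(t_i)-z^*_j(t_{i+1}))$ with no constraint violation, closing the argument.
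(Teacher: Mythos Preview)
Your proposal is correct and aligned with the paper's approach: the paper states the corollary without proof, treating it as immediate from Theorem~\ref{the discretePolicy}, and your argument is precisely the natural elaboration of that step---showing that, because the piecewise constant reference makes the energy-not-served a function of the sampled states alone, matching the continuous-time optimal trajectory at every $t_i$ (Theorem~\ref{the discretePolicy}) transfers the global optimality of Theorem~\ref{Theorem V.1} to the piecewise constant policy. The one point worth making slightly more explicit is that neither policy over-delivers ($1^Tu(t)\le P^r(t)$), so the $\max\{\cdot,0\}$ in the shortfall integrand is inactive and the per-interval ENS really does reduce to the difference between requested energy and the state-difference energy; you implicitly use this when equating the continuous-time ENS of policy~\eqref{eq: explicit policy} with your sampled expression.
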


	 It should be noted that the authors of \cite{Edwards2017} considered the task of directly assigning a piecewise constant input according to \eqref{eq: explicit policy}, but showed that this is suboptimal; \eqref{eq:DTOP} is in fact the correct way to convert \eqref{eq: explicit policy} into discrete time. For the special case of a single store, the authors of \cite{Edwards2017} did propose a greedy policy that is equivalent to \eqref{eq:DTOP}.

	\subsection{Discrete time algorithm}
	Figure~\ref{fig:alg} describes a discrete time algorithm that implements the policy \eqref{eq:DTOP} as follows. We firstly find $\hat{z}$ according to \eqref{eq:x_star_calc}, as is demonstrated in Figure~\ref{fig:alg_explainer}. To this end, we consider the total energy output as a function of candidate $\hat{z}$ values (right panel, transposed). A descending list of potential discontinuities is constructed ($y_i$), which arise because each device $D_j$ contributes partially if $\max\{x_j-\Delta t,\ 0\}<\hat{z}< x_j$, with the energy output decreasing linearly with $\hat{z}$ over this interval, or maximally if $\hat{z}\leq x_j -\Delta t$. We then iterate over this list to find the interval $[y_{i-1},\ y_i]$ that contains $\hat{z}$ fulfilling \eqref{eq:x_star_calc} and, if necessary, linearly interpolate between its limits. Having found $\hat{z}$, we then allocate $u$ according to \eqref{eq:DTOP}.

	\addtocounter{figure}{1}
	\begin{figure}[h]
		\begin{algorithm}[H]
			\caption{\hspace{5mm}$\bm{u}=$ Constant input policy$(P^r,\bm{x},\bar{\bm{p}}, \Delta t)$}
			\begin{algorithmic}[1]
				\setstretch{1.3}\State $\bm{y}\gets\text{unique}\begin{bmatrix}
				\text{sort-descending}\begin{bmatrix}
				\bm{x}\\
				\max\{\bm{x}-\Delta t\bm{1},\bm{0}\}
				\end{bmatrix}\end{bmatrix}$
				\State $\bar{E}\gets0$ \hspace{22.5mm}$\triangleright\ $\begin{footnotesize}\textit{Upper bound on energy in this iteration}\end{footnotesize}
				\State $i\gets0$ \hspace{61mm}$\triangleright\ $\begin{footnotesize}\textit{Counter}\end{footnotesize}
				\Do
				\State $i\gets i+1$
				\State $\underaccent{\bar}{E}\gets\bar{E}$ \hspace{16.5mm}$\triangleright\ $\begin{footnotesize}\textit{Lower bound on energy in this iteration}\end{footnotesize}
				\State $\bar{E}\gets\bar{\bm{p}}^T\max\{\min\{\bm{x}-y_i\bm{1},\Delta t\bm{1}\},\bm{0}\}$
				\Until {$\bar{E}\geq P^r\Delta t$ \textbf{or} $i=\text{length}[\bm{y}]$}
				\If {$\bar{E}\leq P^r\Delta t$}\State $\hat{z}\gets y_i$ \hspace{14mm}$\triangleright\ $\parbox[t]{.55\columnwidth}{\begin{footnotesize}\textit{Upper bound equals energy requirement, or $P^r$ infeasible}\end{footnotesize}}
				\Else 
				\State $\hat{z}\gets y_{i-1}+\frac{P^r\Delta t-\underaccent{\bar}{E}}{\bar{E}-\underaccent{\bar}{E}}\big(y_i-y_{i-1}\big)$ \hspace{11mm}{$\triangleright\ $\begin{footnotesize}\textit{Interpolation}\end{footnotesize}}
				\EndIf
				\State $\bm{u}\gets\bar{\bm{p}}\circ\textnormal{max}\big\{\textnormal{min}\big\{\frac{\bm{x}-\hat{z}\bm{1}}{\Delta t},\bm{1}\big\},0\big\}$
			\end{algorithmic}
		\label{alg: DTOP}
		\end{algorithm}
		\vspace{-3mm}
		\begin{subfigure}[l]{\columnwidth}
		\caption{	Constant input policy algorithm. Bold font is used for vectors (all of length $n$) and normal font for scalars. $\bm{a}\circ\bm{b}$ denotes the Hadamard product of $\bm{a}$ and $\bm{b}$.}
		\label{fig:discrete_alg}
		\end{subfigure}
		\quad
		\begin{subfigure}[l]{\columnwidth}
			\includegraphics[width=0.99\columnwidth]{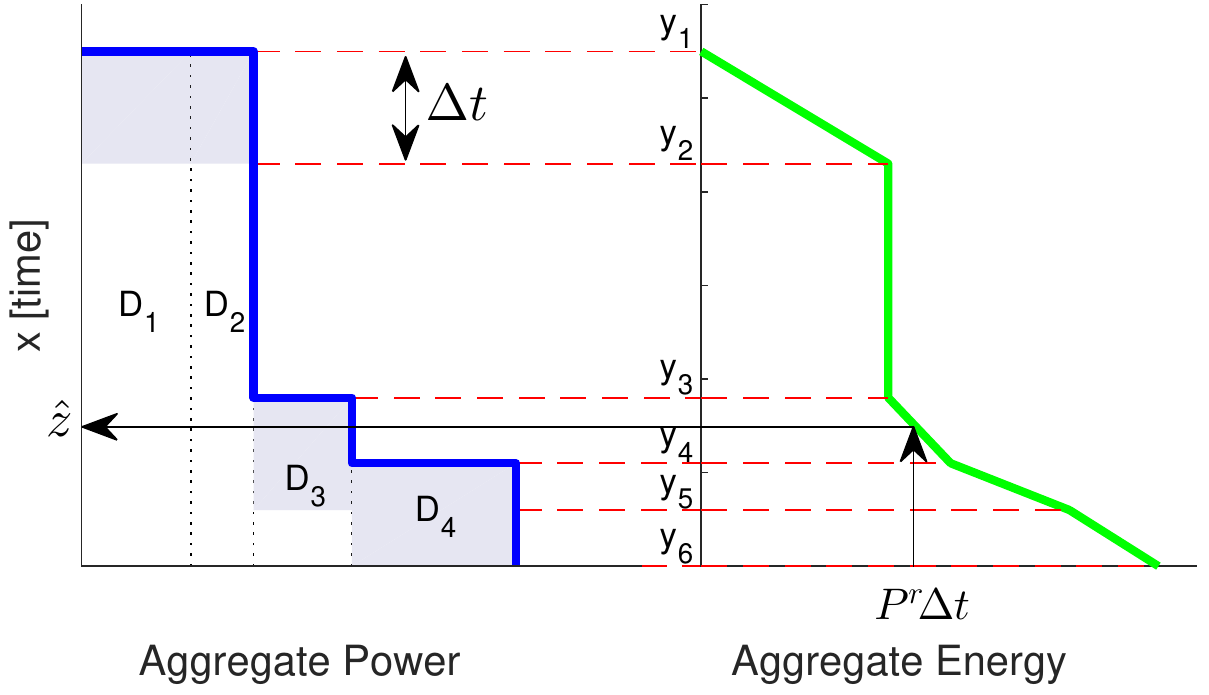}
		    \vspace{-1.5mm}
			\caption{Method for finding $\hat{z}$ using the energy balance of \eqref{eq:x_star_calc}. The filled blue areas represent the stored energy that can be accessed in $\Delta t$.}
			\label{fig:alg_explainer}
		\end{subfigure}
		\addtocounter{figure}{-1}
		\caption{Algorithm for constructing a constant input policy. }
		\label{fig:alg}
	\end{figure}

	\section{Numerical Results}
	\subsection{Simple demonstration case}
	We firstly demonstrate the operation of the discrete time policy in a simple case study. We choose as our fleet 4 devices of the following ratings (energy, power): (8 kWh, 2 kW), (12~kWh, 4 kW), (6 kWh, 3 kW) and (7 kWh, 7 kW). We then determine the shortfall when this fleet is requested to provide a 4 h profile consisting of [4, 18, 12, 1] kW for 1 h each. Table~\ref{tab:example} presents a step-by-step breakdown of the state of charge and power delivered by each device. A graphical comparison of power delivered and power requested is shown in Fig.~\ref{fig:simple_example}, in a direct representation and via an $E$-$p$ analysis. The latter highlights how the max energy gap in $E$-$p$ space is equal to the observed energy-not-served under the optimal policy; both are equal to the minimum energy-not-served.
	
	\renewcommand{\arraystretch}{1.1}
	\begin{table}[h]
	\centering
	\begin{tabular}{|cc|c|cccc|}
		\hline
		\multirow{2}{*}{variable} & \multirow{2}{*}{unit} & \multirow{2}{*}{limit} & \multicolumn{4}{|c|}{time step}\\
		 &&&  1 & 2 & 3 & 4 \\
        \hline\hline
        $P^r$& [kW] && 4 & 18  & 12 & 1 \\
        \hline
		$x_1$& [h]  && 4 & 3  & 2  & 1 \\
		$x_2$& [h]  && 3 & 2.5 & 1.5 & 0.5 \\
		$x_3$& [h]  && 2 & 2  & 1 & 0 \\
		$x_4$& [h]  && 1 & 1  &0  & 0 \\
		\hline
		$\hat{z}$& [h]  && 2.5  & 0 & 0 & 0.5 \\
		\hline
		$u_1$& [kW]  & 2 & 2  & 2 & 2 & 1 \\
		$u_2$& [kW]  & 4 & 2  & 4 & 4 & 0 \\
		$u_3$& [kW]  & 3 & 0 & 3 & 3 & 0 \\
		$u_4$& [kW]  & 7 & 0 & 7 & 0 & 0 \\
		\hline
		ENS& [kWh]  && 0 & 2 & 3 & 0 \\
	\hline
	\end{tabular}
	\vspace{-1mm}
	\caption{\small Step-by-step analysis for the four-device example.}
	\label{tab:example}
    \end{table}
    \renewcommand{\arraystretch}{1}
	
	\vspace{-4.9mm}
	\begin{figure}[h]
		\begin{subfigure}[t]{0.45\columnwidth}
		    \centering
			\includegraphics[width=\columnwidth]{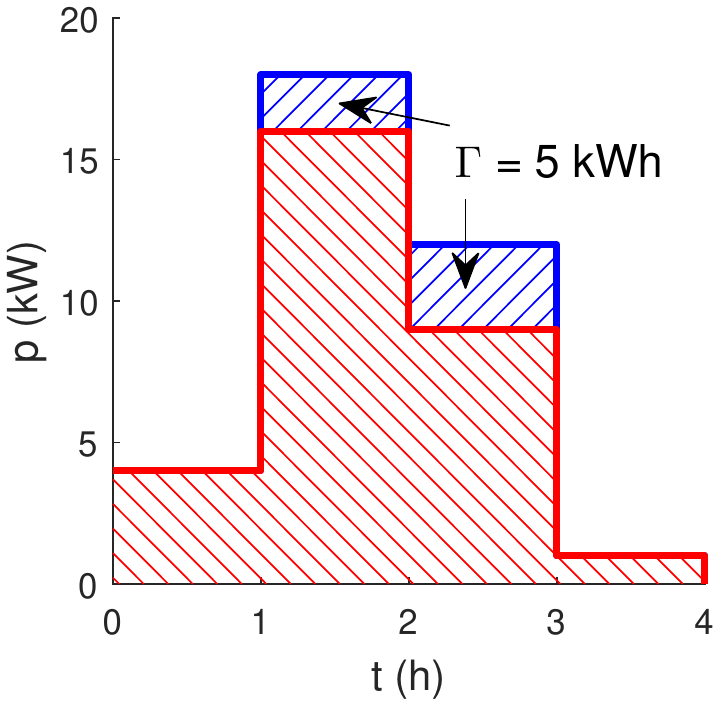}
			\caption{Inability to meet the requested demand. The cumulative output is shown in hatched red, with unserved demand in hatched blue.}
			\label{fig:simple_time_series}
		\end{subfigure}
		~
		\begin{subfigure}[t]{0.45\columnwidth}
		    \centering
			\includegraphics[width=\columnwidth]{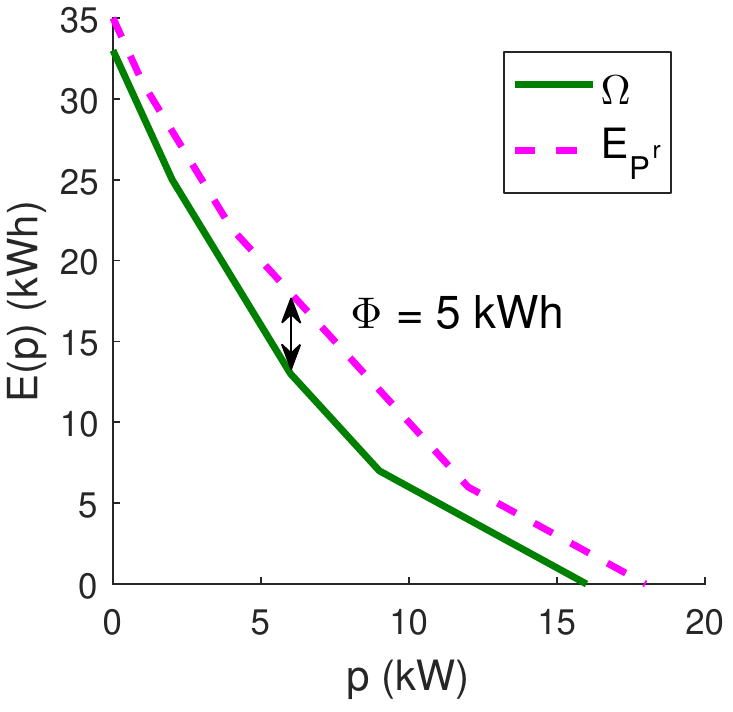}
			\caption{$E$-$p$ transform of the request, indicating the max energy gap.}
			\label{fig:simple_Ep}
		\end{subfigure}
		\caption{The results of the four-device example.}
		\label{fig:simple_example}
	\end{figure}
	
	\vspace{-3mm}
	\subsection{GB capacity adequacy case study}
	\subsubsection{Recharging policy}
	\label{sec:recharge}
	In order to perform a system adequacy-inspired case study, we here consider the recharging of the storage units between loss of supply events. We denote by $\bar{e}_i$ and $\ubar{p}_i$ the maximum energy and charging rate of device $D_i$, respectively, and  define its maximum time-to-go value as $\bar{x}_i\doteq\bar{e}_i/\bar{p}_i$. For notational consistency, the maximum charging rate $\ubar{p}_i$ and $P^r$ (while charging) have a negative sign. We then assume that all devices have an identical combined charging and discharging efficiency, $\eta$, and that the objective of the aggregator while charging is to greedily maximise the feasible set (of discharge signals), with no cross-charging between devices. From Theorem~\ref{Theorem V.6}, we know that the max energy gap of a given reference is equal to the ENS under that signal, and so during recharging operation the aggregator should aim to increase the $E$-$p$ curve as much as possible. The best recharge policy, therefore, is for devices to be filled starting from the device with the smallest time-to-go value, since in this way the $E$-$p$ curve will be pulled up at all power levels. This can be implemented via an inversion of \eqref{eq:DTOP} as follows. The final state $z_i$ of device $D_i$ is constrained both by its maximum capacity, $\bar{x}_i$, and the maximum charge increment $( -\eta\ubar{p}_i\Delta t/\bar{p}_i)$. We combine these constraints to give
	\begin{equation}
	z_i\leq\bar{z}_i\doteq\min\bigg\{x_i-\frac{\eta\ubar{p}_i\Delta t}{\bar{p}_i},\ \bar{x}_i\bigg\}.
	\label{eq:zbar}
	\end{equation}
	The recharge policy is then allocated according to:
	\begin{equation}
	u_i=-\frac{\bar{p}_i}{\eta\Delta t}\cdot\max\{\min\{\hat{z},\bar{z}_i\}-x_i,0\},
	\label{eq: explicit policy recharge}
	\end{equation}
	in which
	\begin{equation}
	\hat{z}\doteq\sup\bigg\{\hat{x}\leq z'\colon\sum_{i=1}^n\bar{p}_i\max\{\min\{\hat{x},\bar{z}_i\}-x_i,0\big\}\leq -P^r\Delta t\bigg\},
	\label{eq:E_balance_charge}
	\end{equation}
	$z'\doteq\max\{\bar{z}_i\colon D_i\in\mathcal{N}\}$, $\Delta t\doteq t_2{-}t_1$ and $\bar{z}$ is defined as in \eqref{eq:zbar}. Note that this policy can be implemented via the following amendments to Algorithm~\ref{alg: DTOP}. The elements of $\bm{y}$ (line 1) should be found as $y_j\in\{x_i,\bar{z}_i\colon D_i\in\mathcal{N}\}$ and considered in ascending order. Each upper bound on energy (line 7) should then be found using \eqref{eq:E_balance_charge} and compared to $-P^r\Delta t$, including in the interpolation (line 12). Finally, the input (line 14) should be allocated according to \eqref{eq: explicit policy recharge}.
	
	Unlike the discharge policy, we do not claim that this recharge policy is optimal in a set-theoretic sense. However, we will investigate whether devices are effectively fully refilled between discharge events according to implementation of this policy to our case study; if this is indeed the case then we are able to neglect any effects which may occur as a result of this potential sub-optimality.
	
	\subsubsection{Alternative policy choices}
	To demonstrate the advantage of implementing the proposed policy, we compare it to the following alternatives (details can be found in the references): \textit{Lowest Power First} \cite{Evans2017,Evans2018}, \textit{Proportion of Power} \cite{Evans2017,Evans2018} and \textit{Proportional Discharge} \cite{Edwards2017}. To adapt the first two for discrete time settings, we upper-bound the dispatch of device $D_i$ at each sample instant by its interval-limited maximum power, $\bar{p}_i\min\{x_i/\Delta t,1\}$, and directly allocate a piecewise constant input according to the feedback law. For consistent results we apply the recharge policy \eqref{eq: explicit policy recharge} regardless of the discharge policy chosen. We also investigate the baseline case in which no storage is contracted.
	
	\subsubsection{System model}
	These policy choices were each applied to the following GB case study system. Annual demand and wind output traces were generated by sampling each independently from a set of annual traces. Historical GB demand measurements for 2006-2015 were used (net demand, excluding exports and recharging of storage units, and corrected for (estimated) output from embedded renewable generation; data kindly provided by Iain Staffell \cite{Staffell2017}). GB wind power output for the period 1985-2014 was synthesised for an assumed 10~GW installed capacity and a capacity factor time series that was derived from MERRA reanalysis data for wind speeds and an assumed constant distribution of wind generation sites \cite{Staffell2016}. The conventional generation portfolio consisted of 63~GW installed capacity, distributed as: 1200~MW (20 units), 600~MW (40), 250~MW (40), 120~MW (20), 60~MW (20), 20~MW (40), 10~MW (60). Annual traces for conventional generating capacity were generated by assuming independence between units, an availability of 0.9 (forced outage rate of 0.1), a discrete time Poisson process for failures and repairs (constant failure and repair rates) and a mean time between subsequent failure events of 2000 h. The system as initialised had an LOLE of 2.9 h/y (computed by convolution). To this system was added the storage that was contracted in the GB 2018 T-4 capacity auction and reported in \cite{NG}, assuming that each of the 27 contract-winning bids was implemented via a single device of unity efficiency, with each power rating multiplied by 3 to demonstrate the effect of higher storage proliferation. Duration ratings were inferred from the listed de-rating factors and Table E1 of the same text.
	
	\subsubsection{Results}	
	The observed EENS is shown in Table~\ref{tab:observed ENS}, in which it can be seen that the optimal policy resulted in the lowest value out of the options considered, as well as the joint-lowest LOLE in this example. We point out here that, despite differences that are smaller than the confidence intervals, these estimates can be directly compared because they were computed from the same set of generation margin traces. In our analysis we consider each contiguous period of supply requests received by the storage units to be a shortfall event. Because most shortfalls that occurred were of a relatively large magnitude as compared to the available storage, any of the other policies was able to closely approximate the lower-bound on EENS achieved by the optimal policy, and resulted in a large improvement over the no storage base-case. Example reference traces and the aggregate output supplied under the optimal policy to meet such requests can be seen in the left-hand plots of Figure~\ref{fig:trace examples}. The right-hand plots of the same figure then show the range of $E$-$p$ curves corresponding to these examples, and demonstrate direct calculation of ENS via an $E$-$p$ curve. Note that infeasibilty can occur in different ways, including when the total energy or maximum power of the request is above the total fleet rating, or both. In addition, it is possible for a request to be feasible in terms of power and energy in isolation, yet infeasible due to limitations because of heterogeneity of the fleet. See \cite{Evans2018} for further discussion of this point. The fleet was found to be fully recharged by the start of 99.4\% of observed shortfall events, hence we conclude that our choice of recharging policy did not significantly affect the results.
	
	\addtolength{\textfloatsep}{-1mm}
	\begin{table}[h]
		\centering
		\begin{tabular}{|l|c|c|c|c|}
			\hline
			Policy & LOLE (h/y) & EENS (MWh/y) \\
			\hline
			Optimal Policy & 1.74 $\pm$ 0.09 & 2431 $\pm$ 165 \\
			Lowest Power First & 1.74 $\pm$ 0.09 & 2443 $\pm$ 165 \\
			Proportion of Power & 1.74 $\pm$ 0.09 & 2435 $\pm$ 165 \\
			Proportional Discharge & 1.85 $\pm$ 0.09 & 2438 $\pm$ 165 \\
			No Storage & 2.98 $\pm$ 0.12 & 3810 $\pm$ 208 \\
			\hline
		\end{tabular}
		\vspace{-1mm}
		\caption{\small Observed LOLE and EENS, across the policy options. 95\% confidence intervals are reported, as estimated from the 10,000 Monte Carlo sampled years.}
		\label{tab:observed ENS}
	\end{table}
	
	\begin{figure}[h]
		\captionsetup[subfigure]{aboveskip=-0.5mm,belowskip=-0.5mm}
		\centering
		\begin{subfigure}[l]{\columnwidth}
			\includegraphics[width=\columnwidth]{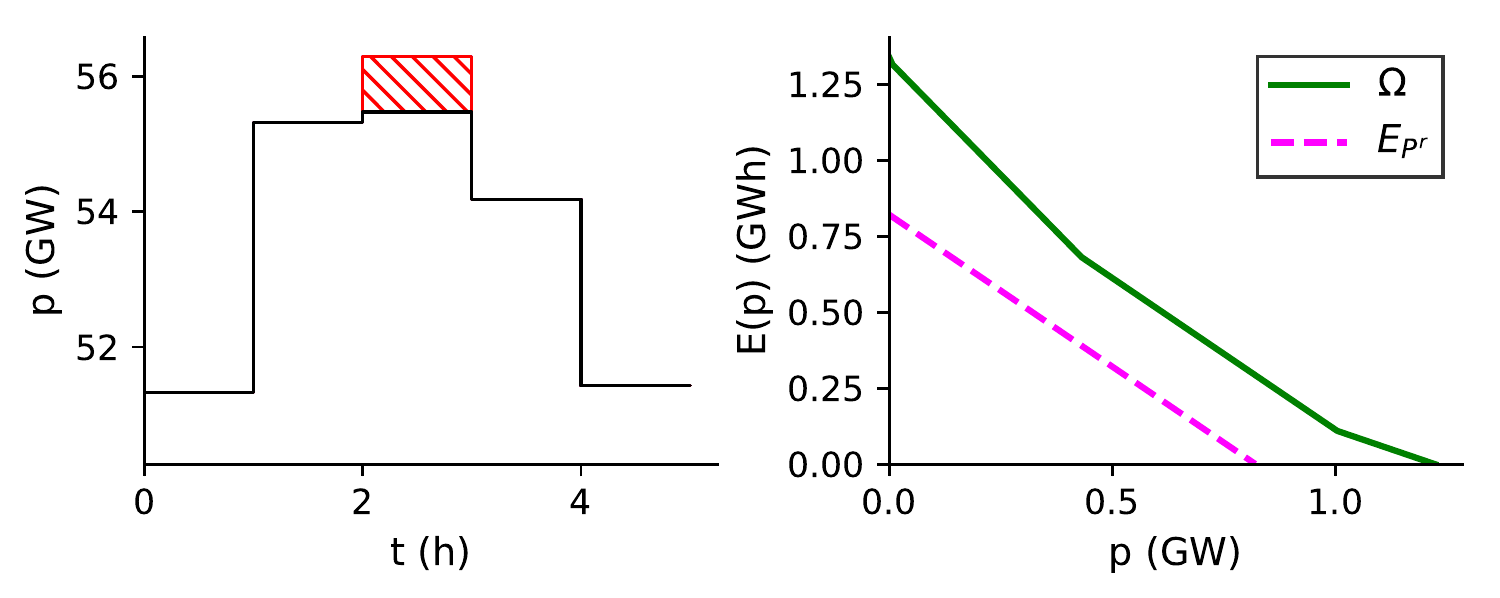}
			\vspace{-5mm}
			\caption{A feasible request}
			\label{fig:Feasible}
		\end{subfigure}
		\quad
		\begin{subfigure}[l]{\columnwidth}
			\includegraphics[width=\columnwidth]{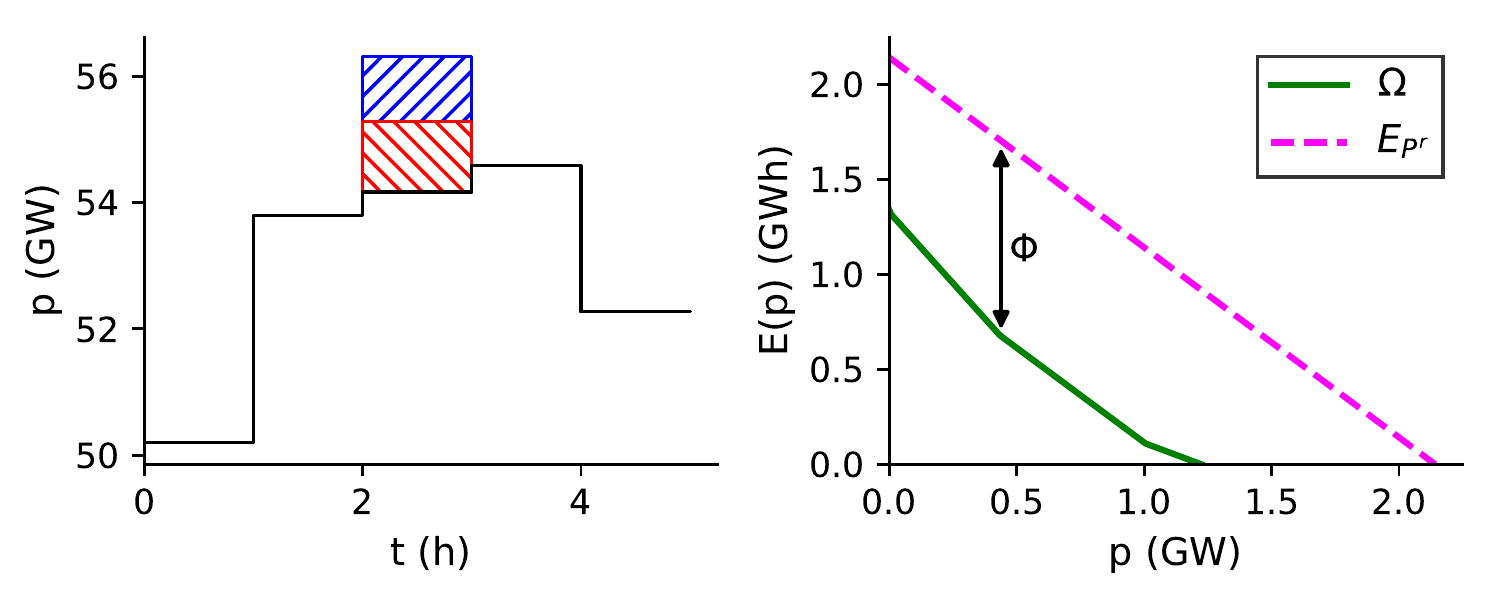}
			\caption{A request that is infeasible in terms of both power and energy}
			\label{fig:Small}
		\end{subfigure}
		\quad
		\begin{subfigure}[l]{\columnwidth}
			\includegraphics[width=\columnwidth]{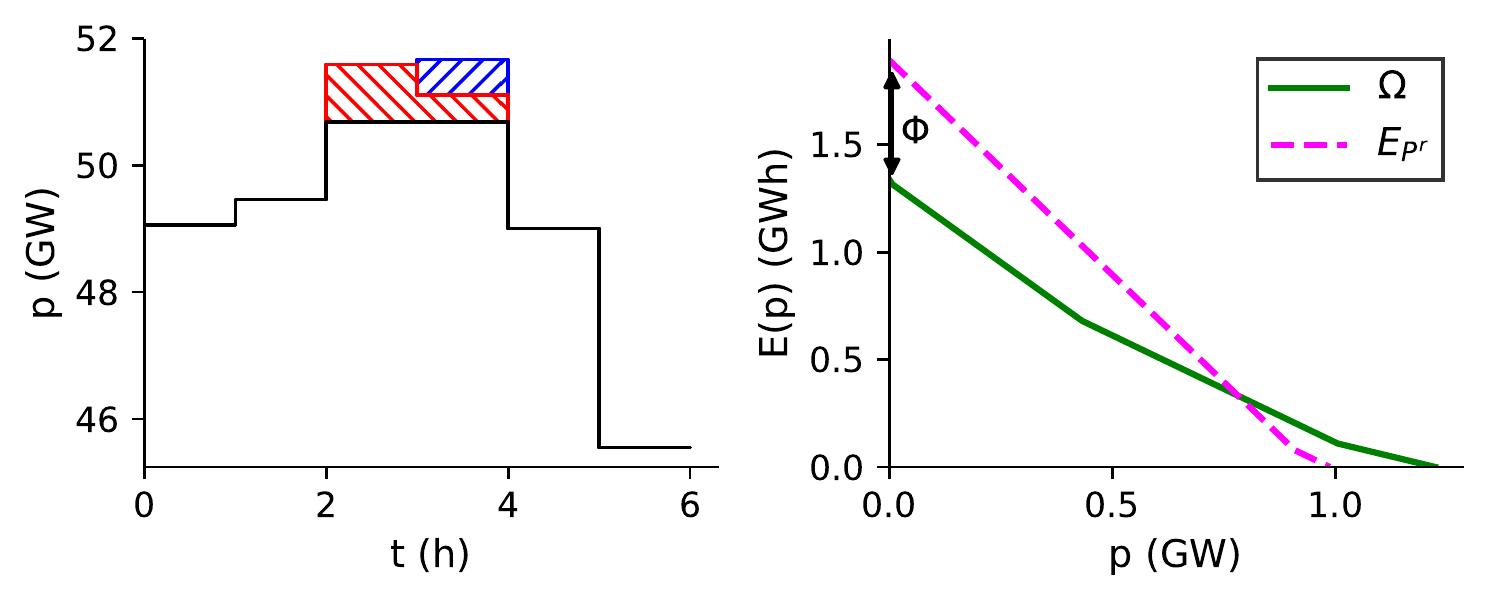}
			\caption{A request that is infeasible in terms of energy alone}
			\label{fig:Energy}
		\end{subfigure}
		\quad
		\begin{subfigure}[l]{\columnwidth}
			\includegraphics[width=\columnwidth]{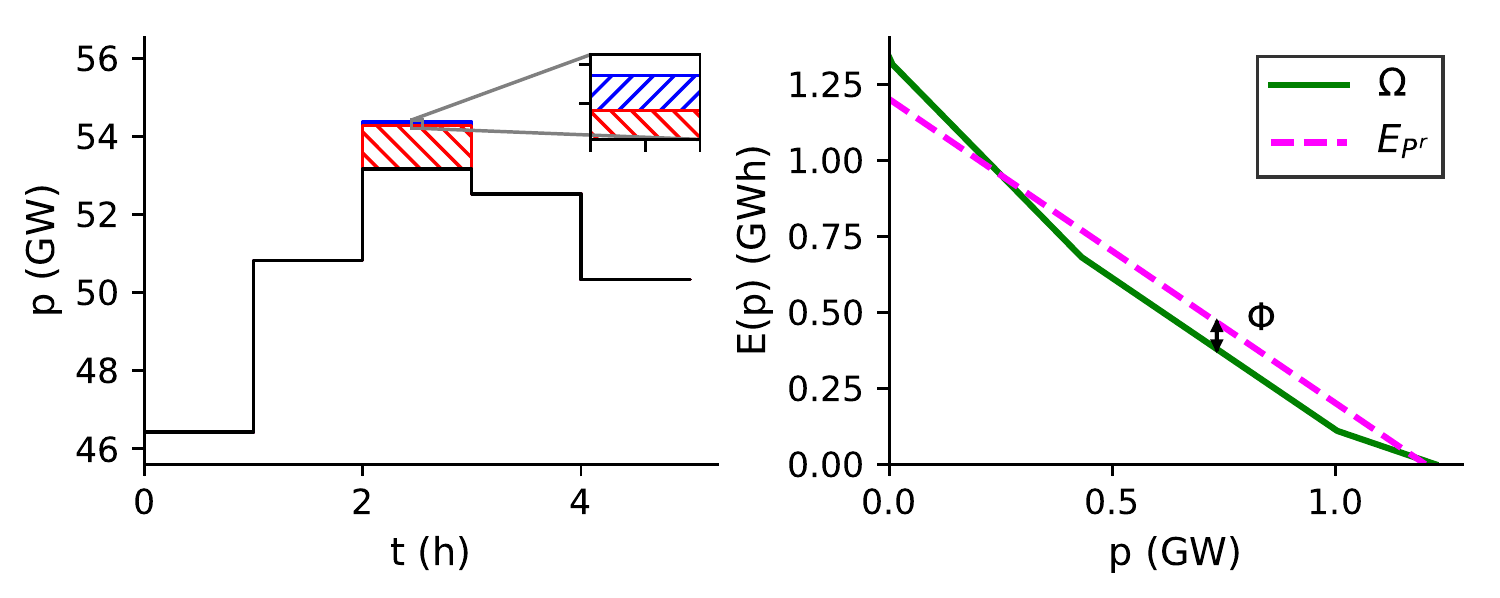}
			\caption{A request that is infeasible as a result of device heterogeneity, but not in terms of power or energy in isolation}
			\label{fig:Neither}
		\end{subfigure}
		\vskip 1.5mm
		\caption{Example types of request infeasibility observed. Left panels shown cumulative power traces (black: generator-supplied demand; hatched red: storage-supplied demand; hatched blue: unserved demand). Right panels show the E-p representation (solid green: capacity; dashed magenta: request) as well as the max energy gap (for infeasible traces).}
		\label{fig:trace examples}
	\end{figure}
	
	\subsubsection{Discussion}
	The results obtained have practical relevance for the GB capacity market. The TSO, National Grid, currently establishes capacity payments for energy storage providers by computing the EENS-determined capacity value of reference units under the assumption of perfect foresight, using 4 different dispatch algorithms, each of which leads to slightly different results \cite{NG}. The dispatch algorithm proposed in this paper, by contrast, achieves a minimisation of EENS in a greedy manner, thereby circumventing the reliance on perfect foresight (or explicit forecasts). We note that, in reality, storage units will most likely not be dispatched in a fully optimal manner, but an assumption of optimality is typically embedded in long-term adequacy studies, e.g. by ignoring unit commitment problems for conventional generation. In this light, our greedy dispatch algorithm is a suitable default model for dispatch of storage in adequacy studies of the GB system, and of other systems where a minimisation of EENS is desired. 
	
	\section{Conclusions and Future Work}
	
	This paper has considered the optimal dispatch of energy-constrained heterogeneous storage units to maximise security of supply. We have presented a greedy policy that minimises unserved energy, regardless of system demand. We have then discussed how analysis in the $E$-$p$ space can be used to determine capacity adequacy of fleets of storage devices, including the immediate determination of energy-not-served under a received reference. We have finally provided an algorithm for implementation of the optimal policy in discrete time settings. The algorithm can be used operationally by aggregators of heterogeneous storage fleets. Moreover, a case study has demonstrated its suitability for EENS-minimising dispatch of storage units in system adequacy studies.
	
	In future work the authors plan to extend these results to include the accommodation of cross-charging between devices and network constraints.

	\section*{Appendix}
	\label{sec:appendix}
	\addcontentsline{toc}{section}{Appendix}
	\addtocounter{section}{1}
	\renewcommand{\theequation}{A.\arabic{equation}}	
	\setcounter{equation}{0}
	\vspace{-1mm}
	\subsection{Proof of \Cref{Theorem V.1}}
	\noindent As an interim procedure, we utilise the following framework:
	\begin{enumerate}
		\item We permit the controller to virtually draw power from depleted devices.
		\item We introduce a virtual store, $D_{n+1}$, that is initially empty and has no power limit.
		\item We maintain the condition that the total (real and virtual) power output must meet the reference at all time instants
	\end{enumerate}
	We then construct the following feedback policy:
		\begin{subequations}
		\label{eq: orig explicit policy}
		\begin{gather}
		\kappa\doteq\begin{bmatrix}
		r_1\bar{U}_1^T\ \dots\ r_q\bar{U}_q^T\
		\end{bmatrix}^T\\
		u^*\doteq\begin{bmatrix}
		\kappa^T\ \ (P^r{-}1^T\kappa)
		\end{bmatrix}^T,
		\end{gather}
		\end{subequations}
		in which $r_i$ is allocated according to \eqref{eq: explicit policy}.
		The actual output, neglecting virtual requests, under the policy \eqref{eq: orig explicit policy} matches that of the policy \eqref{eq: explicit policy}. Hence, Theorem~\ref{Theorem V.1} follows trivially from the following Lemma:
			
		\begin{lemma}
			\label{lem e_optimality1}
			Given any reference $P^r(\cdot)$, the policy \eqref{eq: orig explicit policy} minimises virtual requests, i.e.
			\begin{equation}
			\sum_{i=1}^{n+1}\bar{p}_i\min\{z^*_i(\infty),0\}\geq\sum_{i=1}^{n+1}\bar{p}_i\min\{\tilde{z}_i(\infty),0\},
			\end{equation}
			in which $z^*(\cdot)$ and $\tilde{z}(\cdot)$ denote the trajectories under the policy \eqref{eq: orig explicit policy} and any other choice respectively.
		\end{lemma}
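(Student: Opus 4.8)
The plan is to prove the Lemma directly, from which Theorem~\ref{Theorem V.1} follows as the authors note. I would first turn the stated objective into a statement about a running cost. Because requirement~(3) forces the total (real plus virtual) output to equal $P^r$ at every instant, every admissible trajectory in the virtual framework satisfies the same energy balance $\sum_{i=1}^{n+1}\bar p_i z_i(\infty)=\sum_{i=1}^n\bar p_i x_i-\int_0^\infty P^r\,dt$, a constant independent of the policy. Writing $z_i=\max\{z_i,0\}+\min\{z_i,0\}$ and noting the virtual store never becomes positive, maximising $\sum_{i=1}^{n+1}\bar p_i\min\{z_i(\infty),0\}$ is therefore equivalent to minimising the leftover stored energy $\sum_{i=1}^n\bar p_i\max\{z_i(\infty),0\}$. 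Equivalently, writing the cumulative virtual draw as $V(t)\doteq-\sum_{i=1}^{n+1}\bar p_i\min\{z_i(t),0\}$, the goal is to show $V^*(t)\leq\tilde V(t)$ for all $t$. Differentiating, the instantaneous virtual-draw rate $\dot V$ of any policy is $P^r$ minus the power it draws from currently non-depleted devices, so the quantity to be minimised at each instant is the shortfall against the power presently available from positive devices.

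Next I would introduce the natural monotone invariant. For a state $z$ let $\Pi(z)\doteq\sum_{i\colon z_i>0}\bar p_i$ be its instantaneously available power and recall the capacity curve $\Omega_{\bar p,z}(\cdot)$, which depends only on the non-negative parts of $z$ and (by \cite{Evans2018}) governs which references $z$ can subsequently serve. I would say $z$ dominates $z'$, written $z\succeq z'$, when $\Omega_{\bar p,z}(p)\geq\Omega_{\bar p,z'}(p)$ for all $p\geq0$; since $\Pi(z)=\sup\{p\colon\Omega_{\bar p,z}(p)>0\}$, domination implies $\Pi(z)\geq\Pi(z')$. The claim I would establish is that, started from the common state $x$ and driven by the same reference, the policy \eqref{eq: orig explicit policy} maintains $z^*(t)\succeq\tilde z(t)$ against every admissible $\tilde z(\cdot)$, for all $t$. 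Granting this, the argument closes cleanly: the policy \eqref{eq: orig explicit policy} draws from the highest time-to-go (hence positive) devices first, so its virtual-draw rate is exactly $\max\{P^r-\Pi(z^*),0\}$, the least possible from its own state; any other policy obeys $\dot{\tilde V}\geq\max\{P^r-\Pi(\tilde z),0\}$; and $\Pi(z^*)\geq\Pi(\tilde z)$ gives $\dot V^*\leq\dot{\tilde V}$ pointwise. Integrating from $0$ to $\infty$ with $V(0)=0$ yields exactly the inequality of the Lemma.

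The main obstacle is proving that domination is preserved, i.e.\ that \eqref{eq: orig explicit policy} is greedily optimal for the entire curve $\Omega_{\bar p,z}(\cdot)$ and not merely for its total mass $\Omega_{\bar p,z}(0)$. The idea I would use is geometric: representing $z$ by its sorted power--duration profile $R_z(t)=\sum_{i\colon z_i>t}\bar p_i$, the capacity $\Omega_{\bar p,z}(p)$ is the area of $R_z$ above the level $p$. Drawing from the largest time-to-go devices trims the profile only at its right-hand (low-power) extent, where $R_z$ is small, and so reduces $\Omega_{\bar p,z}(p)$ as little as possible at every high value of $p$ while costing no more than any alternative at low $p$; this is precisely what the priority ordering of \eqref{eq: orig explicit policy} achieves. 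Making this rigorous requires a careful pointwise-in-$p$ comparison of $\tfrac{d}{dt}\Omega_{\bar p,z}(p)$ under the two policies, and in particular handling the instants at which groups of equal time-to-go merge and the support of $R_z$ contracts; this case analysis, which generalises the failure-time argument of \cite{Evans2017}, is where I expect the real work to lie. Once domination-preservation is in hand, the reduction and integration steps above are routine.
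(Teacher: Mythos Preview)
Your reduction to a running virtual-draw $V(t)$ is fine, but the central claim you plan to establish---that the optimal trajectory dominates every alternative in the capacity order, $z^*(t)\succeq\tilde z(t)$ for all $t$---is false in the virtual framework. The alternative controller is free to front-load its virtual draw: take a single real device with $\bar p_1=1$, $x_1=1$, reference $P^r\equiv1$ on $[0,2)$. The optimal policy runs the real device throughout, so $z^*_1(1.5)=-0.5$ and $\Pi(z^*(1.5))=0$. An alternative that draws from the virtual store on $[0,1)$ and from the real device on $[1,2)$ has $\tilde z_1(1.5)=0.5$ and $\Pi(\tilde z(1.5))=1$. Hence $\Pi(z^*)<\Pi(\tilde z)$, domination fails, and at that instant $\dot V^*=1>0=\dot{\tilde V}$, so the pointwise inequality $\dot V^*\le\dot{\tilde V}$ you rely on is also false. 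The ``main obstacle'' you identify is therefore not merely technical; the claim cannot be proved because it does not hold.

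The paper sidesteps this by abandoning the attempt at a time-varying comparison altogether. It fixes a \emph{single} partition $\mathcal S\cup\mathcal Q$ of the devices, chosen after the fact from the terminal state of the optimal trajectory ($\mathcal S$ the devices with $z^*_i(\infty)>0$, $\mathcal Q$ the rest, including $D_{n+1}$). Because the policy preserves the time-to-go ordering, the $\mathcal S$-devices have the highest state at every instant and hence receive top priority throughout; this gives the pointwise inequality $\sum_{i\in\mathcal S}u^*_i(t)\ge\sum_{i\in\mathcal S}\tilde u_i(t)$ directly, without any capacity-curve machinery. The energy balance then transfers this to $\mathcal Q$, and integrating yields the Lemma. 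The key move you are missing is to compare on a fixed device set determined by the optimal terminal state, not on a time-varying ``positive'' set.
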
		
		\begin{proof}
		Consider grouping the devices into exactly two sets as follows:
		\begin{subequations}
			\begin{gather}
			\mathcal{Q}\doteq\{D_i\colon z^*_i(\infty)\leq0\},\\
			S\doteq\mathcal{N}\setminus\mathcal{Q},
			\end{gather}
		\end{subequations}
		those that will be depleted by the final time under the policy \eqref{eq: orig explicit policy} and those that will not, respectively. Note that $D_{n+1}{\in}\mathcal{Q}$ as it has an initial state value of 0. In general, we know that the policy \eqref{eq: orig explicit policy} chooses to run the devices contained in $\mathcal{S}$ as much as possible, since it allocates starting with devices of highest time-to-go value yet depletes these devices last. This remains true with the inclusion of the virtual device, since the policy \eqref{eq: orig explicit policy} only calls upon this device when the reference exceeds the aggregate rating of all other devices. Utilising the notation that $z_{\mathcal{S}}$ denotes the $z$-vector truncated to the devices which are elements of $\mathcal{S}$, and likewise that $1_{\mathcal{S}}$ and $P_{\mathcal{S}}$ denote the unity vector and $P\text{-matrix}$ truncated to the corresponding elements, we are able therefore to say that
		\begin{equation}
		1_{\mathcal{S}}^TP_{\mathcal{S}}\dot{z}^*_{\mathcal{S}}(t) \leq	1_{\mathcal{S}}^TP_{\mathcal{S}}\dot{\tilde{z}}_{\mathcal{S}}(t)\ \ \forall t\geq0.
		\label{eq: S set}
		\end{equation}
		This condition, coupled with the requirement that the aggregate output must match the reference, i.e.
		\begin{equation}
		1_{\mathcal{S}}^TP_{\mathcal{S}}\dot{z}^*_{\mathcal{S}}(t)+1_{\mathcal{Q}}^TP_{\mathcal{Q}}\dot{z}^*_{\mathcal{Q}}(t)=1_{\mathcal{S}}^TP_{\mathcal{S}}\dot{\tilde{z}}_{\mathcal{S}}(t)+1_{\mathcal{Q}}^TP_{\mathcal{Q}}\dot{\tilde{z}}_{\mathcal{Q}}(t)=P^r(t),
		\end{equation}
		leads to
		\begin{equation}
		1_{\mathcal{Q}}^TP_{\mathcal{Q}}\dot{z}^*_{\mathcal{Q}}(t) \geq 	1_{\mathcal{Q}}^TP_{\mathcal{Q}}\dot{\tilde{z}}_{\mathcal{Q}}(t)\ \ \forall t\geq0.
		\label{eq:Qcond}
		\end{equation}
		Integration of \eqref{eq:Qcond} gives
		$1_{\mathcal{Q}}^TP_{\mathcal{Q}}z^*_{\mathcal{Q}}(\infty)\geq1_{\mathcal{Q}}^TP_{\mathcal{Q}}\tilde{z}_{\mathcal{Q}}(\infty)$. Hence,
		\begin{equation}
		\begin{aligned}
		\sum_{i=1}^{n+1}\bar{p}_i\min\{z^*_i(\infty),0\}&=\sum_{D_i\in\mathcal{Q}}\bar{p}_i\min\{z^*_i(\infty),0\}\\
		&\geq\sum_{D_i\in\mathcal{Q}}\bar{p}_i\min\{\tilde{z}_i(\infty),0\}\\
		&\geq\sum_{i=1}^{n+1}\bar{p}_i\min\{\tilde{z}_i(\infty),0\}. 
		\end{aligned}
		\vspace{-5mm}
		\end{equation}
		\end{proof}
		
		\vspace{-6mm}
		\subsection{Proof of \Cref{Lemma V.5}}
			\noindent Firstly, note that such a value of $\tilde{p}$ must both exist and be unique as a result of the convex nature of the $E$-$p$ transform \cite{Evans2018}. Defining $p^*$ as any power level satisfying
			\begin{equation}
			p^*\in \arg\sup_{p\geq0}\big[\max\{E_{P^r}(p)-\Omega_{\bar{p},x}(p),0\}\big],
			\label{eq:p star}
			\end{equation}
			and noting that $E_{P^r}(p)<E_{P^r}(\tilde{p})\ \ \forall p>\tilde{p}$, we are also able to say that $p^*\leq\tilde{p}$ for all $p^*$ as in \eqref{eq:p star}.
			
			Now, consider the region defined by $p\leq \tilde{p}$, and deduce that 
			\begin{equation}
			\max\{\min\{P^r(t),\tilde{p}\}-p,0\}=\left\{
			\begin{array}{@{}ll@{}}
			\tilde{p}-p, & \text{if}\ p\leq\tilde{p}\leq P^r(t) \\
			P^r(t)-p, & \text{if}\ p\leq P^r(t)\leq\tilde{p} \\
			0, & \text{if}\ P^r(t)\leq p\leq\tilde{p},
			\end{array}\right.
			\end{equation}
			which covers all possible cases as a result of the condition $p\leq\tilde{p}$. Hence, the $E$-$p$ transform of the saturated signal can be found as
			\begin{equation}
			\begin{aligned}
			E_{\bar{P}^r}(p)
			&=\int_0^\infty\max\{\min\{P^r(t),\tilde{p}\}-p,0\}dt \\
			&=\int_0^\infty\big[\max\{P^r(t)-p,0\}-\max\{P^r(t)-\tilde{p},0\}\big]dt \\
			&=E_{P^r}(p)-E_{P^r}(\tilde{p}) \\
			&=E_{P^r}(p)-\Phi_{\bar{p},x}(P^r)\ \ \forall p\leq\tilde{p}.
			\end{aligned}
			\end{equation}
			This expression is non-negative because, from the cumulative nature of the $E$-$p$ transform, $E_{P^r}(p)\geq E_{P^r}(\tilde{p})$ for all $p$ with $0\leq p\leq\tilde{p}$.
			
			Now consider instead the region defined by $p>\tilde{p}$, for which we have, from the definition of the saturation signal, that $E_{\bar{P}^r}(p)=0$. Hence, the general from of the adjusted $E$-$p$ transform is
			\begin{equation}
			E_{\bar{P}^r}(p)=\max\{E_{P^r}(p)-\Phi_{\bar{p},x}(P^r),0\},
			\label{eq:E_tildeP}
			\end{equation}
			therefore $E_{\bar{P}^r}(p)\leq\Omega_{\bar{p},x}(p)\ \forall p$ and so the saturated signal is feasible (see \cite{Evans2018} for further details). 
			
			Now, in general, if $\check{P}^r(\cdot)$ is implemented as a feasible approximation to $P^r(\cdot)$, then the energy-not-served can be calculated as $\Gamma(\check{P}^r,P^r)=E_{P^r}(0)-E_{\check{P}^r}(0)$. Evaluation of \eqref{eq:E_tildeP} at $p=0$ leads to
			\begin{equation}
			\Gamma(\bar{P}^r,P^r)=E_{P^r}(0)-E_{\bar{P}^r}(0)=\Phi_{\bar{p},x}(P^r),
			\end{equation}
			hence the energy-not-served under the saturated signal is equal to the max energy gap. 
			\qed

		\subsection{Proof of \Cref{Theorem V.6}}
		\begin{lemma}
			\label{lem fgap1}
			For any given reference $P^r(\cdot)$, the max energy gap forms an upper bound on the minimum energy-not-served, i.e. $\Gamma_{\bar{p},x}^*(P^r)\leq\Phi_{\bar{p},x}(P^r)$.
		\end{lemma}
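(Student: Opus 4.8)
The goal is to bound the minimum energy-not-served $\Gamma^*_{\bar{p},x}(P^r)$ from above by the max energy gap $\Phi_{\bar{p},x}(P^r)$. Since $\Gamma^*_{\bar{p},x}(P^r)$ is defined as a minimisation over \emph{all} feasible output signals $1^Tu(\cdot)\in\mathcal{F}_{\bar{p},x}$, it suffices to exhibit a single feasible output whose energy-not-served does not exceed $\Phi_{\bar{p},x}(P^r)$; any such witness immediately yields the inequality. The plan is to use the saturated signal $\bar{P}^r(\cdot)$ constructed in Lemma~\ref{Lemma V.5} as precisely this witness, so that the present Lemma reduces to a feasibility check plus an application of the preceding result.

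First I would invoke Lemma~\ref{Lemma V.5}, which already supplies both ingredients required: its $E$-$p$ transform is $E_{\bar{P}^r}(p)=\max\{E_{P^r}(p)-\Phi_{\bar{p},x}(P^r),0\}$, and its energy-not-served satisfies $\Gamma(\bar{P}^r,P^r)=\Phi_{\bar{p},x}(P^r)$. The crucial observation is that this $E$-$p$ transform lies pointwise below the capacity curve, i.e. $E_{\bar{P}^r}(p)\leq\Omega_{\bar{p},x}(p)$ for all $p\geq0$, as was established at the end of the proof of Lemma~\ref{Lemma V.5}. By the feasibility characterisation in $E$-$p$ space from \cite{Evans2018}, domination of the capacity curve is equivalent to feasibility, so $\bar{P}^r(\cdot)\in\mathcal{F}_{\bar{p},x}$ and there exists an admissible control $u(\cdot)\in\mathcal{U}_{\bar{p}}$ realising it, with $1^Tu(\cdot)=\bar{P}^r(\cdot)$.

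With a feasible output signal in hand, the conclusion is immediate: $\bar{P}^r(\cdot)$ is an admissible competitor in the minimisation defining $\Gamma^*_{\bar{p},x}(P^r)$, so
\begin{equation*}
\Gamma^*_{\bar{p},x}(P^r)\leq\Gamma(\bar{P}^r,P^r)=\Phi_{\bar{p},x}(P^r),
\end{equation*}
where the final equality is the first claim of Lemma~\ref{Lemma V.5}. I expect the only step deserving genuine care to be the feasibility assertion: confirming that the pointwise domination $E_{\bar{P}^r}(p)\leq\Omega_{\bar{p},x}(p)$ indeed certifies membership in $\mathcal{F}_{\bar{p},x}$, which rests entirely on the equivalence between $E$-$p$ domination and feasibility borrowed from \cite{Evans2018}. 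Everything else is bookkeeping, and the symmetric reverse inequality $\Gamma^*_{\bar{p},x}(P^r)\geq\Phi_{\bar{p},x}(P^r)$ would then complete the proof of Theorem~\ref{Theorem V.6}.
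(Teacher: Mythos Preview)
Your proposal is correct and follows essentially the same approach as the paper: the paper's proof is a single sentence invoking Lemma~\ref{Lemma V.5} to produce a feasible signal whose energy-not-served equals $\Phi_{\bar{p},x}(P^r)$, which by definition of the minimum yields the bound. You have simply spelled out the feasibility verification and the minimisation argument in more detail, but the underlying idea is identical.
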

		\begin{proof}
			Follows directly from Lemma~\ref{Lemma V.5}; if any signal exists with such a value of energy-not-served, this must form an upper bound on the minimum value that this quantity can take.
		\end{proof}
		\begin{lemma}
			\label{lem fgap2}
			For any given reference $P^r(\cdot)$, the max energy gap forms a lower bound on the minimum energy-not-served, i.e. $\Gamma_{\bar{p},x}^*(P^r)\geq\Phi_{\bar{p},x}(P^r)$.
		\end{lemma}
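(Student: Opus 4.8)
The plan is to establish the reverse inequality to Lemma~\ref{lem fgap1} by showing that \emph{every} feasible approximation of $P^r(\cdot)$ incurs an energy-not-served of at least $\Phi_{\bar{p},x}(P^r)$; taking the infimum over feasible signals then gives $\Gamma^*_{\bar{p},x}(P^r)\geq\Phi_{\bar{p},x}(P^r)$. So I would fix an arbitrary $\check{P}^r(\cdot)\in\mathcal{F}_{\bar{p},x}$. First I would argue there is no loss in assuming $\check{P}^r(t)\leq P^r(t)$ pointwise: replacing $\check{P}^r$ by $\min\{\check{P}^r,P^r\}$ leaves $\Gamma(\check{P}^r,P^r)$ unchanged, since the integrand $\max\{P^r-\check{P}^r,0\}$ is insensitive to the portion of $\check{P}^r$ exceeding $P^r$, while the reduced power draw only keeps more stored energy and so preserves membership in $\mathcal{F}_{\bar{p},x}$.

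Next I would invoke the $E$-$p$ feasibility characterisation of \cite{Evans2018}: because $\check{P}^r(\cdot)$ is feasible, its transform is dominated by the capacity at every power level, i.e. $E_{\check{P}^r}(p)\leq\Omega_{\bar{p},x}(p)$ for all $p\geq0$. I would then select a maximiser $p^*$ of the continuous gap function $\max\{E_{P^r}(p)-\Omega_{\bar{p},x}(p),0\}$; such a $p^*$ exists because $E_{P^r}$ and $\Omega_{\bar{p},x}$ are continuous and decay to $0$ as $p\to\infty$, so the supremum defining $\Phi_{\bar{p},x}(P^r)$ is attained on $[0,\infty)$. If that maximum is $0$ the claim is immediate, so I assume $\Phi_{\bar{p},x}(P^r)=E_{P^r}(p^*)-\Omega_{\bar{p},x}(p^*)>0$.

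The crux is a level-splitting identity at height $p^*$. Applying $f=\max\{f-p^*,0\}+\min\{f,p^*\}$ (valid for any $f\geq0$) to both $P^r$ and $\check{P}^r$, I would decompose the shortfall as
\begin{equation}
\Gamma(\check{P}^r,P^r)=\int_0^\infty\!\!\big(P^r-\check{P}^r\big)\,dt=\big[E_{P^r}(p^*)-E_{\check{P}^r}(p^*)\big]+\int_0^\infty\!\!\big[\min\{P^r,p^*\}-\min\{\check{P}^r,p^*\}\big]dt,
\end{equation}
where the first equality uses $\check{P}^r\leq P^r$. The bracketed transform difference is at least $E_{P^r}(p^*)-\Omega_{\bar{p},x}(p^*)=\Phi_{\bar{p},x}(P^r)$ by the feasibility bound at $p^*$, and the remaining integral is nonnegative since $\check{P}^r\leq P^r$ forces $\min\{\check{P}^r,p^*\}\leq\min\{P^r,p^*\}$. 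Hence $\Gamma(\check{P}^r,P^r)\geq\Phi_{\bar{p},x}(P^r)$, and minimising over feasible signals closes the argument.

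The main obstacle I anticipate lies not in the inequality chain, which is elementary once $p^*$ is fixed, but in cleanly justifying the two preliminary reductions: that truncation to $\min\{\check{P}^r,P^r\}$ keeps the signal in $\mathcal{F}_{\bar{p},x}$, and that the supremum in $\Phi_{\bar{p},x}$ is genuinely attained so a single level $p^*$ can carry the whole estimate. Both rest on the convexity, monotonicity and decay properties of the $E$-$p$ transform established in \cite{Evans2018}.
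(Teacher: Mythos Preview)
Your proposal is correct and follows the same overall strategy as the paper: pick a maximiser $p^*$ of the gap $E_{P^r}(p)-\Omega_{\bar p,x}(p)$, use the $E$-$p$ feasibility characterisation from \cite{Evans2018} at that level, and then propagate the resulting deficit down to $p=0$ to bound the ENS. The difference lies in how that last step is carried out. The paper differentiates the $E$-$p$ transforms, obtains $\tfrac{d}{dp}E_{P^{r*}}(p)\geq\tfrac{d}{dp}E_{P^r}(p)$ via a level-set measure comparison, and integrates from $0$ to $p^*$; you instead use the purely algebraic split $f=\max\{f-p^*,0\}+\min\{f,p^*\}$ to write $\Gamma$ as the transform gap at $p^*$ plus a manifestly nonnegative remainder. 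Your route is more elementary (no almost-everywhere differentiability or measure arguments needed) and has the added virtue of making explicit the reduction to $\check P^r\leq P^r$, which the paper uses tacitly both in the level-set inequality and in identifying $\Gamma$ with $E_{P^r}(0)-E_{P^{r*}}(0)$. The paper's version, on the other hand, yields the slightly stronger intermediate conclusion $E_{P^{r*}}(p)\leq E_{P^r}(p)-\Phi_{\bar p,x}(P^r)$ for all $p\leq p^*$, not just at $p=0$.
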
	
		\begin{proof}
			Denote by $P^{r*}(\cdot)\in\{\check{P}^r(\cdot)\colon\Gamma(\check{P}^r,P^r)=\Gamma_{\bar{p},x}^*\}$ any signal resulting in the minimum ENS; and note that this must be feasible by construction. Defining $p^*$ as any power level satisfying $p^*{\in} \arg\max_{p\geq0}\big[E_{P^r}(p)-\Omega_{\bar{p},x}(p)\big]$, we know that $E_{P^r}(p^*){=}\Omega_{\bar{p},x}(p^*){+}\Phi_{\bar{p},x}(P^r)$ from the definition of $\Phi_{\bar{p},x}(P^r)$. Hence 
			\begin{equation}
			E_{P^{r*}}(p^*)\leq\Omega_{\bar{p},x}(p^*)=E_{P^r}(p^*)-\Phi_{\bar{p},x}(P^r).
			\end{equation}
			Moreover, for almost all $p$,
			\begin{equation}
			\begin{aligned}
			\frac{d}{dp}E_{P^{r*}}(p)&=\frac{d}{dp}\int_0^\infty \textnormal{max}\big\{P^{r*}(t)-p,0\big\}dt\\
			&=-\mu\big(\big\{\tau\colon P^{r*}(\tau)\geq p\big\}\big) \\
			&\geq-\mu\big(\big\{\tau\colon P^r(\tau)\geq p\big\}\big) 
			=\frac{d}{dp}E_{P^{r}}(p),
			\end{aligned}
			\label{eq:ddp_comp}
			\end{equation}
			hence integration of \eqref{eq:ddp_comp} with respect to $p\leq p^*$ yields
			\begin{equation}
			\begin{aligned}
			E_{P^{r*}}(p)
			&=E_{P^{r*}}(p^*)-\int_{p}^{p^*}\frac{d}{dp}E_{P^{r*}}(\pi)d\pi\\
			&\leq E_{P^{r*}}(p^*)-\int_{p}^{p^*}\frac{d}{dp}E_{P^{r}}(\pi)d\pi\\
			&\leq E_{P^{r}}(p^*)-\Phi_{\bar{p},x}(P^r)-\int_{p}^{p^*}\frac{d}{dp}E_{P^{r}}(\pi)d\pi\\
			&=E_{P^{r}}(p)-\Phi_{\bar{p},x}(P^r)\ \ \forall p\leq p^*.
			\end{aligned}
			\end{equation}
			Evaluation of this condition at $p=0$ leads to the result.
		\end{proof}
	
		\vspace{-4mm}
		\subsection{Proof of \Cref{Lemma VI.1}}
			\noindent Firstly, consider the case that $0<P^r<\sum_{i=1}^n\bar{p}_i\min\{x_i,\Delta t\}$. Defining
			\begin{equation}
			\hat{x}\doteq\min_{i\in\{1,...,n\}}\big\{z_i\colon z_i<x_i\big\},
			\end{equation}
			we are able to deduce that a positive value of $\hat{x}$ must exist, and moreover that any arbitrary device $D_i$ falls into one of the following three categories at the final time:			
			\begin{enumerate}
				\item $D_i{\in}\{D_j{\colon} z_j{=}\hat{x}\}$, in which case $z_i{=}\hat{x}$.
				\item $D_i{\in}\{D_j{\colon} z_j{>}\hat{x}\}$, in which case it has been run at maximum power for the entire time interval, hence $z_i{=}x_i{-}\Delta t$.
				\item $D_i{\in}\{D_j{\colon} z_j{<}\hat{x}\}$, in which case it has not been run at any time within the interval and so $z_i{=}x_i$.
			\end{enumerate}
			Hence,
				\begin{equation}
				z_i=\min\{\max\big\{x_i-\Delta t,\hat{x}\big\},x_i\} \label{eq:z_i}
				\end{equation}
			with $0<\hat{x}<\max\{x_i,\ i=1,...,n\}$.

			Next, consider the case that $P^r=0$, for which the final state is trivially $z=x$, or equivalently \eqref{eq:z_i} with $\hat{x}=\max\{x_i,\ i=1,...,n\}$.
			
			Finally, consider the remaining case, for which $z_i=\max\{x_i-\Delta t,0\}$, or equivalently \eqref{eq:z_i} with $\hat{x}=0$. Thus the result follows as a combination of the three cases.
		\qed
		
		\vspace{-2mm}
		\subsection{Proof of \Cref{Lemma VI.2}}
		\noindent It follows by construction that this choice of input returns the final state. Moreover, this input can be written as
		\begin{equation}
		\begin{aligned}
		(t_2-t_1)u&=P(x-z) 
		=-P\int_{t_1}^{t_2}\dot{z}(\tau)d\tau \\
		&=-P\int_{t_1}^{t_2}\big[-P^{-1}u^*(\tau)\big]d\tau 
		=\int_{t_1}^{t_2}u^*(\tau)d\tau,
		\end{aligned}
		\end{equation}
		in which $u^*(\cdot)$ denotes the input allocated according to \eqref{eq: explicit policy}. Hence the constant input is an averaged form of the optimal input and must therefore be feasible.
		\qed

	\bibliographystyle{IEEEtran}
	\bibliography{biblio3,biblio_manual2}
\end{document}